
\newcommand{\ba}{\begin{array}}
\newcommand{\ea}{\end{array}}
\newcommand{\bc}{\begin{center}}
\newcommand{\ec}{\end{center}}

\newcommand{\beqn}[1]{\begin{equation}\label{#1}}
\newcommand{\eeqn}{\end{equation}}
\newcommand{\be}{\begin{equation}}
\newcommand{\ee}{\end{equation}}

\newcommand{\beqnn}{\begin{eqnarray}}
\newcommand{\eeqnn}{\end{eqnarray}}

\newtheorem{theorem}{Theorem}
\newtheorem{corollary}{Corollary}
\newtheorem{lemma}{Lemma}
\newtheorem{definition}{Definition}

\newtheorem{problem}{Problem}





\newcommand{\diag}{{\rm diag}}










\documentclass[12pt,draftcls,onecolumn]{IEEEtran}

\ifCLASSINFOpdf
\else
\fi
\usepackage{graphicx}
\usepackage{cite}
\usepackage{remark}
\usepackage{amssymb}  
\newremark{remark}{Remark}
\usepackage[cmex10]{amsmath}

\usepackage{algorithm}
\usepackage{algpseudocode}

\hyphenation{op-tical net-works semi-conduc-tor}

\begin{document}
%
\title{Receding Horizon Control Based Consensus Scheme in General Linear Multi-agent Systems}

%
%
%

\author{  Huiping~Li,~\IEEEmembership{member~IEEE,}
          Weisheng~Yan
\thanks{
This work was supported by the start-up research fund of the Northwestern Polytechnical University(NPU);
the basic research foundation of NPU with grant no. GEKY 1004.}
\thanks{The authors are with the Department
of Automation, School of Marine Science and Technology, Northwestern Polytechnical University, Xi'an,
710072, China (e-mail: lihuiping@nwpu.edu.cn;peter.huiping@gmail.com).}
}

%


\maketitle

\begin{abstract}
This paper investigates the consensus problem of general linear multi-agent systems under the framework of optimization.
A novel distributed receding horizon control (RHC) strategy for consensus is proposed.
We show that the consensus protocol generated by the unconstrained distributed RHC can be expressed in an explicit form.
Based on the resulting consensus protocol the necessary and sufficient conditions for ensuring consensus are developed.
Furthermore, we specify more detailed consensus conditions for multi-agent system with general and one-dimensional linear dynamics depending on the difference Riccati equations (DREs), respectively.
Finally, two case studies verify the proposed scheme and the corresponding theoretical results.
\end{abstract}

\begin{IEEEkeywords}
Consensus, receding horizon control (RHC), multi-agent systems, general linear systems, optimization, diagraph.
\end{IEEEkeywords}

%
\IEEEpeerreviewmaketitle

\section{INTRODUCTION}\label{sec_introduction}
In last two decades, the cooperative control of networked multi-agent systems has received a lot of
attention due to its wide applications. In particular, the consensus problem is of significant importance, and has inspired much progress, e.g., \cite{Reza_04_consensus_TAC,Moreau_05_TAC_consensus,Wei_05_TAC_consensus}.
In this paper, we are interested in solving the consensus problem of multi-agent systems from the distributed optimal control perspective.
The multi-agent system under study is of fixed directed network topology and general linear time invariant (LTI) dynamics associated with each agent.
The objective of this paper is to design a locally optimal consensus strategy for each agent, and further to investigate under what conditions the closed loop system can achieve consensus by the designed strategy.

The optimality of control protocols brings many desired properties such as phase and gain margin, leading to robustness of the closed loop systems.
The core difficulty of the cooperative optimal control for multi-agent systems lies in the fact that the centralized optimization problem cannot generally be distributed among agents, with few exceptions \cite{Cao_08_TSMC_consensus,Movric_14_TAC_optimal_tracking}.
As a result, the best way of circumventing the difficulty is to utilize the locally optimal control strategy and further combine it with regional information exchange scheme to address the system-level interaction and coupling, approximately achieving some global or cooperative behaviors.

In the literature, one approach to the optimal cooperative control is the linear quadratic regulation (LQR) scheme.
For example, the distributed LQR problem of multi-agent systems with LTI dynamics is studied in \cite{Borrelli_08_TAC_distributedLQR},
showing that the overall stability can be guaranteed by appropriately designing the local LQR and using information exchange among network topology.
The consensus problem with optimal Laplacian matrix for multi-agent systems of first-order dynamics is investigated in \cite{Cao_08_TSMC_consensus}, where it is shown that the design of globally optimal Laplacian matrix can only be achieved by properly choosing the global cost function coupled with the network topology.
Recently, the LQR-based consensus problem of multi-agent systems with LTI dynamics and fixed directed topology is addressed in \cite{Movric_14_TAC_optimal_tracking}, indicating that the globally optimal consensus performance can be achieved by using locally optimal consensus protocol if and only if the overall performance index is selected in a special form depending on the graph structure.

Another way of achieving the (sub-)optimal cooperative control of multi-agent systems is the distributed receding horizon control (RHC) strategy, also known as distributed model predictive control. Based on this approach, there have been many results developed for cooperative stabilization, formation control, and its applications. For example, the distributed RHC-based scheme for cooperative stabilization is proposed in \cite{Dunbar_06_Auto_DMPC,Muller_12_DMPC_IJRNC}, and the formation stabilization is addressed in \cite{Keviczky_06_auto_DMPC} and its application is reported in \cite{Keviczky_08_TCST_formation_MPC}.
Furthermore, the robust distributed RHC problems that can be used for cooperative stabilization are studied in \cite{Richards_IJC_07_RDMPC} for linear systems with coupled constraints and in \cite{Li_TAC_14_RDRHC} for nonlinear systems. To further attack the unreliability of the communication networks, the cooperative stabilization problem of multi-agent nonlinear systems with communication delays are investigated in \cite{Li_SCL_DMPC_14,Franco_08_TAC_DMPC,Li_Auto_14_DRHC}. Note that all of these results use cost functions as Lyapunov functions to prove stability.

Even though it is very desirable to achieve optimal consensus by distributed RHC scheme, there have been few results for the consensus problem of multi-agent systems due to the difficulty that the cost function may not be directly used as Lyapunov function.
In \cite {Ferrari_09_TAC_MPC_consensus}, Ferrari-Trecate {\em et al.} study the consensus problem of multi-agent systems of first-order and second-order dynamics, and the sufficient conditions for achieving consensus are developed by exploiting the geometry properties of the optimal path. Zhan {\em et al.} investigate the consensus problem of first-order sampled-data multi-agent systems in \cite {Zhan_13_auto_consensus}, where state and control input information needs to be exchanged. Note that these two results are only focused on special type of linear systems, which is of limited use.
In \cite{Johansson_08_auto_optimal_consensus} Johansson {\em et al.} propose to use the negotiation to reach the optimal consensus value by implementing the primal decomposition and incremental sub-gradient algorithm, but the effect of the network topology is not explicitly considered.

It can seen that the receding horizon control -based consensus scheme for multi-agent systems with general LTI dynamics has not been solved, and the relationship between consensus and the interplay between the network topology and the RHC design is still unclear, which motivates this study. The main contribution of this paper is two-fold.
\begin{itemize}
\item A novel distributed RHC strategy is proposed for designing the consensus protocol.
      In this strategy, each agent at each time instant only needs to obtain its neighbors's state once via communication network, which is more efficient than the work in \cite{Li_TAC_14_RDRHC,Dunbar_06_Auto_DMPC} (where the state and its predicted trajectory need to be transmitted) and \cite{Muller_12_DMPC_IJRNC,Zhan_13_auto_consensus} (where the neighbors' information needs to be exchanged for many times at each time instant). In addition, we show that the consensus protocol generated by the RHC is a feedback of the linear combination of each agent's state and its neighbors' states, and the feedback gains depend on a set of difference matrix equations. We believe our results partially extend the results in \cite{Ferrari_09_TAC_MPC_consensus,Zhan_13_auto_consensus} to multi-agent systems with LTI dynamics.
\item Given the proposed distributed RHC strategy, a necessary and sufficient condition for ensuring consensus is developed. We show that the consensus can be reached if and only if the network topology contains a spanning tree and a simultaneous stabilization problem can be solved. Furthermore, more specifical sufficient consensus conditions depending on one Reccati difference equation for the multi-agent with LTI dynamics and one-dimensional linear dynamics are also developed, respectively.
\end{itemize}

The remainder of this paper is organized as follows.
Section \ref{Sec_problem_formulation} introduces some well-known results from graph theory and formulates the problem to be studied.
Section \ref{Sec_consensus_protocol} presents the novel distributed RHC scheme, and develops a detailed consensus protocol.
The necessary and sufficient conditions for enuring consensus are proposed in Section \ref{sec_consensus_analysis}, and more specifical sufficient consensus conditions for multi-agent systems with LTI dynamics and one-dimensional linear dynamics are also reported in this section. The case studies are demonstrated in Section \ref{Sec_simulation}.
Finally, the conclusion remarks are summarized in Section~\ref{Sec_Conclusion}.

For the ease of presentation, the following notations are adopted in this paper.
The symbol $\mathbb{R}$ represents the real space.
For a real matrix $A$, its transposition and inverse (if the inverse exists) are denoted as ``$A^{\rm T}$'' and ``$A^{-1}$'', respectively.
If $A$ is a complex matrix, then the transposition is denoted by $A^{\rm H}$.
Given a real (or complex) number $\lambda$, the absolute value (modulus) is defined as $|\lambda|$.
Given a matrix (or a column vector) $X$ and another matrix $P$ with appropriate dimension,
the $2$-induced norm (or the Euclidean norm) of $X$ is denoted by $\|X\|$ and the $P$-weighted norm of $X$
is denoted by $\|X\|_{P} \triangleq \sqrt{X^{\rm T}PX}$.
Given matrix $Q$, $Q>0$ ($Q\geqslant0$) stands for the matrix $Q$ being positive definite (semi-positive definite).
Define the column operation ${\rm col}\{x_1,x_2,\cdots,x_n\}$ as
$[x_1^{\rm T},x_2^{\rm T},\cdots,x_n^{\rm T}]^{\rm T}$, where $x_1, x_2,\cdots,x_n$ are column vectors.
$I_n$ stands for the identity matrix of dimension $n$,
and $\textbf{1}_n$ represents an $n$-dimensional column vector $[1, \cdots, 1]^{\rm T}$.
The symbol $\otimes$ stands for the Kronect product.

\section{Problem Formulation}\label{Sec_problem_formulation}
Consider a multi-agent system of $M$ linear agents. For each agent $i$, the dynamics is described as
\begin{equation}\label{equ_each_agent}
x_i(k+1) = A x_i(k) + B u_i(k),
\end{equation}
where $x_i(k+1) \in \mathbb{R}^{n}$ is the system state,
and $u_i(k) \in \mathbb{R}^m$ is the control input of agent $i$.

There exists a communication network among the $M$-agent system, and the network topology is described as a
directional graph (diagraph) $\mathcal{G} \triangleq \{\mathcal{V},\mathcal{E},\mathcal{A}\}$.
Here, $\mathcal{V} = \{i, i = 1,\cdots,M\}$ is the collection of the nodes of the digraph representing each agent $i$,
$\mathcal{E} \subset \mathcal{V}\times \mathcal{V}$ denotes the edges of paired agents, and $\mathcal{A} =[a_{ij}]\in\mathbb{R}^{M\times M}$ is the adjacency matrix with
$a_{ij}\geqslant 0$. If there is a connection from agent $i$ to $j$, then $a_{ij} = 1$; otherwise, $a_{ij} = 0$. We assume there is no self-circle in the diagraph $\mathcal{G}$, i.e., $a_{ii} = 0$. For each agent $i$, its neighbors are denoted by the agents from which it can obtain information, and the index set for agent $i$'s neighbors is denoted as $\mathcal{N}_i \triangleq \{j|(i,j)\subset \mathcal{E}\}$. The number of agents in $\mathcal{N}_i$ is denoted as $|\mathcal{N}_i|$.
The in-degree of agent $i$ is denoted as $Deg_{in}(i) = \sum_{i=j}^M a_{ij}$, and the degree matrix is denoted as $\mathcal{D}={\rm diag}\{Deg(1),\cdots, Deg (M)\}$. The Laplacian matrix of $\mathcal{G}$ is denoted as $\mathcal{L} = \mathcal{D} - \mathcal{A}$.
Arrange the eigenvalues of $\mathcal{L}$ as $|\lambda_1| \leqslant |\lambda_2| \leqslant \cdots \leqslant |\lambda_M|$.
Assume that the diagraph $\mathcal{G}$ is fixed. Firstly, we recall some standing results from the graph theory \cite{Wei_05_TAC_consensus,Reza_04_consensus_TAC,You_11_TAC_consensus}.

\begin{lemma}\label{lem_spanning_tree}
The digraph $\mathcal{G}$ contains a spanning tree if and only if zero is a simple eigenvalue of the Laplacian matrix $\mathcal{L}$, i.e.,
$0 < |\lambda_2| \leqslant \cdots \leqslant |\lambda_M|$, and the corresponding right eigenvector is ${\rm \textbf{1}}$.
\end{lemma}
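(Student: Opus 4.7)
The plan is to prove both directions separately, noting first the universal fact that $\mathcal{L}\mathbf{1}=0$. By the construction $\mathcal{L}=\mathcal{D}-\mathcal{A}$, the $i$-th row sum of $\mathcal{L}$ equals $Deg_{in}(i) - \sum_j a_{ij}=0$, so $\mathbf{1}$ is always a right eigenvector associated with the eigenvalue $0$, and every eigenvalue of $\mathcal{L}$ lies in the closed right half-plane by Gershgorin's disk theorem (all discs are centered at $Deg_{in}(i)$ with radius $Deg_{in}(i)$). Hence it remains to characterize when $0$ is a \emph{simple} eigenvalue.

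For the direction ``spanning tree $\Rightarrow$ $0$ is simple,'' I would argue via a maximum-principle style argument on the null space. Suppose $\mathcal{L}v = 0$; then for each node $i$ with $Deg_{in}(i)>0$, the coordinate $v_i$ is a convex combination of its in-neighbors' values, $v_i=\frac{1}{Deg_{in}(i)}\sum_{j\in\mathcal{N}_i} a_{ij} v_j$. Let $r$ be the root of the spanning tree and pick a node $i^*$ where $\mathrm{Re}(v_i)$ is maximal. Walking backwards along any directed path from $r$ to $i^*$ in the spanning tree and applying the convex combination identity at each step forces $v_j = v_{i^*}$ for every $j$ on the path. Since the spanning tree reaches every node from $r$, iterating this argument yields $v = v_{i^*}\mathbf{1}$, so the null space is one-dimensional and the geometric multiplicity of $0$ is one. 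To upgrade geometric simplicity to algebraic simplicity, I would use the fact that the generalized null space is also spanned by $\mathbf{1}$ via a similar propagation argument on $\mathcal{L}w=\mathbf{1}$, which has no solution because $\mathbf{1}$ is not in the range (by inspecting row sums).

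For the converse ``$0$ is simple $\Rightarrow$ spanning tree,'' I would prove the contrapositive. Assume $\mathcal{G}$ has no spanning tree. Then one can identify at least two distinct terminal strongly connected components $\mathcal{C}_1, \mathcal{C}_2$ in the condensation DAG (components with no outgoing edges to the rest of the graph). For each such component $\mathcal{C}_k$, the restricted Laplacian on its vertex set, together with the ``absorbing'' rows for vertices whose only reachable terminal component is $\mathcal{C}_k$, yields a null vector $v^{(k)}$ that is constant on the basin of $\mathcal{C}_k$ and zero elsewhere (solvability follows because each basin is itself spanning-tree-reachable from its terminal component). Since $v^{(1)}$ and $v^{(2)}$ are linearly independent, the geometric multiplicity of $0$ is at least two, contradicting simplicity.

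The main obstacle is the converse: constructing two linearly independent null vectors requires a careful decomposition of $\mathcal{G}$ into the basins of its terminal strongly connected components and verifying that the induced linear system on each basin has a nontrivial constant-on-sink solution. This is where I would most need to invoke the cited references \cite{Wei_05_TAC_consensus,Reza_04_consensus_TAC,You_11_TAC_consensus} to avoid re-deriving standard condensation-DAG machinery.
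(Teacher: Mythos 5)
The paper does not actually prove this lemma: it is recalled as a standing result and attributed to the cited references, so there is no in-paper argument to compare against. Your architecture (zero row sums give $\mathcal{L}\mathbf{1}=0$ plus Gershgorin localization; a maximum principle for the forward direction; a condensation/terminal-component construction of two independent null vectors for the converse) is the standard route and can be made to work, but two steps as written contain genuine gaps. The first is directional: from a maximizer $i^*$ of $\mathrm{Re}(v_i)$, the convex-combination identity forces equality only on the \emph{in-neighbors} of $i^*$, i.e., it propagates \emph{toward} the root, so you may conclude that the root attains $\max_j \mathrm{Re}(v_j)$; but you cannot then ``iterate'' down the tree to the remaining nodes, because a child's value is a convex combination of \emph{all} of its in-neighbors and is not forced to equal the maximum merely because one in-neighbor attains it. The standard fix is to run the same argument on the minimizer, conclude the root attains both $\max$ and $\min$ of $\mathrm{Re}(v)$, hence $\mathrm{Re}(v)$ is constant, and repeat for $\mathrm{Im}(v)$.

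The second gap is in upgrading geometric to algebraic simplicity. You claim $\mathbf{1}\notin\mathrm{Range}(\mathcal{L})$ ``by inspecting row sums,'' but zero row sums only give $\mathcal{L}\mathbf{1}=0$, a statement about the right kernel; for a digraph the column sums of $\mathcal{L}$ are not zero in general, so they say nothing about the range. What you need is a nonzero left null vector $w\geqslant 0$ with $w^{\rm T}\mathbf{1}>0$ (the left Perron vector, supported on the unique closed strongly connected component), so that $\mathcal{L}u=\mathbf{1}$ would force $0=w^{\rm T}\mathcal{L}u=w^{\rm T}\mathbf{1}>0$, a contradiction; the existence of such a $w$ is itself a nontrivial consequence of the spanning-tree hypothesis and must be argued or cited. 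The converse direction is correct in outline, modulo orientation bookkeeping (under the paper's convention the relevant components are those receiving no information from outside), and you reasonably defer those condensation details to the cited references.
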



For the linear system in (\ref{equ_each_agent}), a standing assumption is made: The pair $[A,B]$ is controllable.
We assume that at each time instant $k$, over the given communication network,
agent $i$ can get state information $x_j(k)$, $j\in\mathcal{N}_i$ from its neighbors.
The communication network is reliable and the information can be transmitted instantaneously without time consumption.

\begin{definition}\cite{You_11_TAC_consensus}
The discrete-time multi-agent system in (\ref{equ_each_agent}) with a given network topology $\mathcal{G}$, and under a distributed control protocol
$u_i(k) = f(x_i(k), x_{-i}(k))$, is said to achieve consensus if,
\begin{equation*}
\lim_{k \rightarrow \infty } \|x_i(k) - x_j(k) \| = 0, j = 1, \cdots, M,
\end{equation*}
where $x_{-i}(k)$ are the collection of agent $i$'s neighbors' states, i.e., $x_{-i}(k) \triangleq \{x_j,j\in\mathcal{N}_i\}$ and $f: \underbrace{\mathbb{R}^n \times \cdots\times \mathbb{R}^n}_{|\mathcal{N}_i| + 1}\rightarrow \mathbb{R}^m$.
\end{definition}
In this paper, we are interested in designing $u_i(k) = f(x_i(k), x_{-i}(k))$ using the distributed RHC strategy for each agent $i$ to achieve consensus.

\section{Distributed RHC based Consensus Protocol}\label{Sec_consensus_protocol}
This section first presents the distributed RHC based consensus strategy, and then develops the detailed consensus protocols for each agent.
\subsection{Distributed RHC Scheme}
For each agent $i$, we propose to utilize the following optimization problem to generate the consensus protocol.
\begin{problem}
\begin{align*}
U_i^*(k) = \arg\min_{\hat{U}_i(k)} J_i(\hat{x}_i(k),\hat{U}_i(k),x_{-i}(k))
\end{align*}
subject to
\begin{align*}
\hat{x}_i(k+n+1|k) = A \hat{x}_i(k+n|k) + B \hat{u}_i(k+n|k),
\end{align*}
where $n = 0, \cdots, N-1$.
Here, $\hat{x}_i(k|k) = x_i(k)$, $\hat{U}_i(k)= \{ \hat{u}_i(k|k), \cdots, \hat{u}_i(k+N-1|k)\}$ and $J_i(\hat{x}_i(k),\hat{U}_i(k),x_{-i}(k)$ is defined as
\begin{align}
  & J_i(\hat{x}_i(k),\hat{U}_i(k),x_{-i}(k))\nonumber\\
= &\sum_{n=0}^{N-1} \left(\|\hat{u}_i(k+n|k)\|_{R_i}^2 + \sum_{j\in \mathcal{N}_i}a_{ij}\|\hat{x}_i(k+n|k) - x_j(k)\|_{Q_i}^2\right)\nonumber\\
  & + \sum_{j\in \mathcal{N}_i}a_{ij}\|\hat{x}_i(k+N|k) - x_j(k)\|_{Q_{iN}}^2,
\end{align}
where $R_i>0$, $Q_i>0$ and $Q_{iN}>0$ are symmetric matrices, and $N>0$ is a positive integer which is called the prediction horizon.
\end{problem}

At each time instant $k$, each agent gets its neighbors' state information $x_j(k)$, $j\in\mathcal{N}_i$, solves Problem 1 and then uses $u^*_i(k|k)$ as the control input to achieve consensus.
\begin{remark}
In Problem 1, we use the term $\sum_{j\in \mathcal{N}_i}a_{ij}\|\hat{x}_i(k+n|k) - x_j(k)\|_{Q_i}^2$ to achieve consensus. Note that the consensus term can be rewritten as $|\mathcal{N}_i|\|\hat{x}_i(k+n|k) - ave(x_{-i}(k))\|_{Q_i}^2$, where $ave(x_{-i}(k))$ is the average of agent $i$'s neighboring states, which is consistent with the consensus term in \cite{Ferrari_09_TAC_MPC_consensus}. However, Problem 1 generates the first-order and second-order cases in \cite{Ferrari_09_TAC_MPC_consensus} to general linear systems. In addition, the solve of Problem 1 only requires the exchange of agent $i$ neighbors' state for one time, which results in less communication load in comparison with these in \cite{Muller_12_DMPC_IJRNC,Zhan_13_auto_consensus}, where more information is needed to exchange for consensus.
\end{remark}

\subsection{Specific Consensus Protocol}
This subsection shows that the consensus protocol generated by solving Problem 1 is can be expressed in an explicit form.
By using the convex optimization approach, the result is reported in the following theorem.
\begin{theorem}\label{thm_consensus_law}
For the system in (\ref{equ_each_agent}) with network topology $\mathcal{G}$ and the optimization problem 1, the optimal solution is given as:
\begin{equation*}
\hat{u}_i^*(k+n|k) = \sum_j^{M} a_{ij} [K_i(n) \hat{x}_i(k+n|k) + G_i(n) x_j(k)],
\end{equation*}
Here, $n = 0,\cdots,N-1$, $K_i(n) = -R^{-1}_i B^{\rm T} [I + \sum_{j=1}^{M}a_{ij} P_i(n+1)B R^{-1}_iB^{\rm T}]^{-1} P_i(n+1) A$, and
$G_i(n) = -R^{-1}_i B^{\rm T} [I + \sum_{j=1}^{M}a_{ij} P_i(n+1)B R^{-1}_i B^{\rm T}]^{-1} \Delta_i(n+1)$,
where $P_i(n+1)$ and $\Delta_i(n+1)$ satisfy the following matrix equations, respectively:
\begin{align}
P_i(n) = & A^{\rm T}[I + \sum_{j=1}^{M}a_{ij} P_i(n+1)B R^{-1}_i B^{\rm T}]^{-1}P_i(n+1)A + Q_i,\label{equ_Pn} \\
\Delta_i(n) =& A^{\rm T}[I + \sum_{j=1}^{M}a_{ij} P_i(n+1)B R^{-1}_i B^{\rm T}]^{-1}\Delta_i(n+1) -Q_i,\label{equ_Deltan}
\end{align}
with the initial conditions $P_i(N) = Q_{iN}$ and $\Delta_i(N) = -Q_{iN}$.
Furthermore, the consensus protocol for each agent $i$ is given as:
\begin{equation}\label{equ_consensus_law}
u_i(k) = u_i^*(k|k).
\end{equation}
\end{theorem}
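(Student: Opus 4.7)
The plan is to prove the theorem by dynamic programming / backward induction over the prediction horizon. From the perspective of agent $i$ at time $k$, the neighbors' states $x_j(k)$, $j\in\mathcal{N}_i$, are fixed (already measured) constants over the optimization window $[k,k+N]$, so Problem 1 reduces to an unconstrained finite-horizon linear-quadratic problem with an affine ``tracking'' forcing in the stage cost. Its optimal cost-to-go will therefore be jointly quadratic in $\hat{x}_i(k+n|k)$ and in the fixed data $\{x_j(k)\}$, and I would posit the ansatz
\begin{equation*}
V_i(n,\hat{x}) = d_i\,\hat{x}^{\rm T} P_i(n)\hat{x} + 2\hat{x}^{\rm T}\Delta_i(n)\sum_{j=1}^{M} a_{ij} x_j(k) + c_i(n),
\end{equation*}
where $d_i \triangleq \sum_{j=1}^{M} a_{ij}$ is the in-degree of agent $i$. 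A direct expansion of the terminal cost $\sum_j a_{ij}\|\hat{x}-x_j(k)\|_{Q_{iN}}^2$ matches this ansatz at $n=N$ with $P_i(N)=Q_{iN}$ and $\Delta_i(N)=-Q_{iN}$, recovering the stated boundary conditions.

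For the inductive step I would substitute $\hat{x}_i(k+n+1|k)=A\hat{x}+B\hat{u}$ into $V_i(n+1,\cdot)$, add the stage cost $\|\hat{u}\|_{R_i}^2+\sum_j a_{ij}\|\hat{x}-x_j(k)\|_{Q_i}^2$, and set the gradient in $\hat{u}$ to zero. This yields the linear normal equation
\begin{equation*}
\bigl[R_i+d_i B^{\rm T}P_i(n+1)B\bigr]\hat{u}^{*} = -d_i B^{\rm T}P_i(n+1)A\hat{x} - B^{\rm T}\Delta_i(n+1)\sum_{j=1}^{M} a_{ij} x_j(k),
\end{equation*}
whose left-hand matrix is positive definite since $R_i>0$ and $P_i(n+1)\geqslant 0$ by induction. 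Invoking the Woodbury-type identity $[R_i+d_i B^{\rm T}PB]^{-1}B^{\rm T}=R_i^{-1}B^{\rm T}[I+d_i PBR_i^{-1}B^{\rm T}]^{-1}$ then recasts $\hat{u}^{*}$ as $d_i K_i(n)\hat{x}+G_i(n)\sum_j a_{ij}x_j(k)$, which is exactly $\sum_j a_{ij}[K_i(n)\hat{x}+G_i(n)x_j(k)]$, matching the stated form.

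I would then substitute this $\hat{u}^{*}$ back into the Bellman equation and group the result by monomial type in $\hat{x}$ and in the $x_j(k)$'s. The pure quadratic coefficient will deliver the Riccati recursion (\ref{equ_Pn}), and the bilinear cross coefficient will deliver (\ref{equ_Deltan}) after dividing out the common vector $\sum_j a_{ij}x_j(k)$; the purely $\hat{x}$-independent remainder supplies $c_i(n)$, which is irrelevant to the minimizer and so need not be tracked. The receding-horizon protocol (\ref{equ_consensus_law}) is then just the $n=0$ instance of $\hat{u}_i^{*}(k+n|k)$. The main technical obstacle I anticipate is precisely the coefficient-matching step: verifying that the linear-in-$\hat{x}$ piece of $V_i(n,\cdot)$ truly factors as $\Delta_i(n)\sum_j a_{ij}x_j(k)$ with $\Delta_i(n)$ independent of the particular neighbor states — this factorization is what allows the per-neighbor form $\sum_j a_{ij}[K_i(n)\hat{x}+G_i(n)x_j]$ — together with the bookkeeping needed to have the two recursions emerge in the symmetric form $[I+\sum_j a_{ij} P_i(n+1)BR_i^{-1}B^{\rm T}]^{-1}$ that appears in the statement rather than the equivalent $[R_i+d_iB^{\rm T}P_i(n+1)B]^{-1}$ form.
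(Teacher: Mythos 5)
Your proposal is correct, but it reaches the result by a genuinely different route than the paper. The paper forms the Lagrangian of Problem 1, writes the KKT stationarity conditions (\ref{equ_KKT1})--(\ref{equ_KKT4}), and proves by backward induction that the costate admits the affine representation $\lambda_i(n)=\sum_{j}a_{ij}[P_i(n)\hat{x}_i^*(k+n|k)+\Delta_i(n)x_j(k)]$, from which the control follows via $\hat{u}_i^*=-R_i^{-1}B^{\rm T}\lambda_i(n+1)$; the two recursions (\ref{equ_Pn})--(\ref{equ_Deltan}) drop out of the inductive step. You instead run dynamic programming with the value-function ansatz $V_i(n,\hat{x})=d_i\hat{x}^{\rm T}P_i(n)\hat{x}+2\hat{x}^{\rm T}\Delta_i(n)\sum_j a_{ij}x_j(k)+c_i(n)$, which is essentially the dual formulation: your $\nabla_{\hat{x}}V_i(n,\cdot)/2$ is the paper's costate, so the algebra (the normal equation, the push-through identity converting $[R_i+d_iB^{\rm T}PB]^{-1}B^{\rm T}$ into $R_i^{-1}B^{\rm T}[I+d_iPBR_i^{-1}B^{\rm T}]^{-1}$, and the coefficient matching that yields (\ref{equ_Pn}) and (\ref{equ_Deltan})) is the same in substance. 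Your terminal-cost expansion correctly recovers $P_i(N)=Q_{iN}$, $\Delta_i(N)=-Q_{iN}$, and the factorization of the linear term through $\sum_j a_{ij}x_j(k)$ that you flag as the main obstacle does go through, because the neighbor states enter every stage cost only via that fixed aggregate. What the DP route buys is a constructive derivation of the affine costate form (the paper must guess (\ref{equ_lamdai}) before verifying it by induction) plus the optimal cost-to-go as a by-product and a built-in sufficiency argument via strict convexity of each one-step minimization; what the paper's KKT route buys is a more mechanical verification that avoids carrying the constant term $c_i(n)$ and the $d_i$-scaling bookkeeping in the ansatz.
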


\begin{proof}
By introducing the lagrange multipliers $\lambda_i(n+1) \in \mathbb{R}^n, n = 0, \cdots, N-1$, construct a lagrange function as
\begin{align*}
L(\hat{U}_i(k),\hat{X}_i(k)) \triangleq & \frac{1}{2} J_i(\hat{x}_i(k),\hat{U}_i(k),x_{-i}(k)) \\
                               & + \sum_{n=0}^{N-1}\lambda^{\rm T}_i(n+1)[A\hat{x}_i(k+n|k)\nonumber \\
                               &  + B \hat{u}_i(k+n|k) - \hat{x}_i(k+n+1|k)],
\end{align*}
where $\hat{X}_i(k) \triangleq \{\hat{x}_i(k|k),\cdots,\hat{x}_i(k+N-1|k)\}$.
Since $Q_i>0, Q_{iN}>0$ and $R_i>0$, the objective function is strictly convex. According to the the Karush-Kuhn-Tucker (KKT) conditions \cite{Lu_12_optimization}, there exists a unique global minima for Problem 1, which satisfies the following conditions:
\begin{align}
\frac{\partial L}{\partial \hat{x}_i^*(k+n|k)} = & \sum_{j=1}^{M} Q_i a_{ij}[\hat{x}_i^*(k+n|k)-x_j(k)]+ A^{\rm T} \lambda_i(n+1) - \lambda(n) = 0,\label{equ_KKT1} \\
\frac{\partial L}{\partial \hat{x}_i^*(k+N|k)} = & \sum_{j=1}^{M} Q_{iN} a_{ij}[\hat{x}_i^*(k+N|k)-x_j(k)]- \lambda_i(N) = 0,\label{equ_KKT2}\\
\frac{\partial L}{\partial \hat{u}_i^*(k+n|k)} & = R_i \hat{u}_i^*(k+n|k) + B^{\rm T} \lambda_i(n+1) = 0,\label{equ_KKT3}\\
A\hat{x}_i^*(k+n|k)& + B \hat{u}_i^*(k+n|k) - \hat{x}_i^*(k+n+1|k) = 0,\label{equ_KKT4}
\end{align}
where $n = 0, \cdots, N-1$.

In what follows, we first show an equation to evaluate $\lambda_i(n)$, by mathematical induction as follows:
\begin{equation}\label{equ_lamdai}
\lambda_i(n) = \sum_{j=1}^{M} [P_i(n) \hat{x}_i^*(k+n|k) + \Delta_i(n) x_j(k)],
\end{equation}
where, $ n = N, \cdots, 0$.
Using (\ref{equ_KKT3}), we can get
\begin{equation}\label{equ_u_k}
\hat{u}_i^*(k+n|k) = -R_i^{-1} B^{\rm T} \lambda_i(n+1).
\end{equation}
When $n=N$, from (\ref{equ_KKT2}), we can obtain that
\begin{align*}
\lambda_i(N) = & \sum_{j=1}^{M} a_{ij}Q_{iN}[\hat{x}_i^*(k+N|k)-x_j(k)]\nonumber\\
             = & \sum_{j=1}^{M} a_{ij}[P_i(N)\hat{x}_i^*(k+N|k) + \Delta_i(N) x_j(k)].
\end{align*}
Assume that (\ref{equ_lamdai}) holds for some $n = l+1$ with $ l \leqslant N-1$.
When $n = l$, by combining (\ref{equ_KKT1}), (\ref{equ_KKT4}) and (\ref{equ_u_k}), we have
\begin{align*}
\lambda_i(l) = & A^{\rm T} [I + \sum_{j=1}^{M}a_{ij} P_i(n+1)B R^{-1}_i B^{\rm T}]^{-1}\nonumber\\
               & \times [\sum_{j=1}^{\rm M} a_{ij} (P_i(l+1) A\hat{x}_i^*(k+n|k)+ \Delta_i(l+1) x_j(k))]\nonumber\\
               &  + \sum_{j=1}^{M} a_{ij} Q_i(\hat{x}_i^*(k+n|k) - x_j(k))\nonumber\\
=& \sum_{j=1}^{M} a_{ij}[P_i(l) \hat{x}_i(k+n|k) + \Delta_i(l) x_j(k)].
\end{align*}
Therefore, using the mathematical induction, (\ref{equ_lamdai}) has been proven.
Finally, the result in Theorem \ref{thm_consensus_law} follows by plugging (\ref{equ_lamdai}) into (\ref{equ_u_k}).
This completes the proof.
\end{proof}

Note that $P_i(n)$ satisfies a modified Reccati difference equation in (\ref{equ_Pn}), which depends on the network topology.
In order to decouple (\ref{equ_Pn}) from the network topology, we design $R_i$ such that
\begin{equation}\label{equ_design_R}
R_1/|\mathcal{N}_1| = \cdots = R_M/|\mathcal{N}_M| = R,
\end{equation}
where $R>0$. Using (\ref{equ_design_R}), (\ref{equ_Pn}) and (\ref{equ_Deltan}) become
\begin{align}
P_i(n) = A^{\rm T}[I + P_i(n+1)B R^{-1} B^{\rm T}]^{-1}P_i(n+1)A + Q_i,\label{equ_RDE_Pn} \\
\Delta_i(n) = A^{\rm T}[I + P_i(n+1)B R^{-1} B^{\rm T}]^{-1}\Delta_i(n+1) -Q_i.\label{equ_RDE_Deltan}
\end{align}
Therefore, $P_i(n)$ follows a standard RDE in (\ref{equ_RDE_Pn}).

\section{Consensus Analysis}\label{sec_consensus_analysis}
This sections first develops a necessary and sufficient condition for achieving consensus of general linear agent systems, then further propose the sufficient conditions for multi-agent system of general LTI and one-dimensional dynamics, respectively.
\subsection{Necessary and Sufficient Condition}
The necessary and sufficient condition for reaching consensus is as follows.
\begin{theorem}\label{thm_iff}
For the system in (\ref{equ_each_agent}) with the network topology $\mathcal{G}$ and the consensus protocol in (\ref{equ_consensus_law}), assume that $R_i$ is designed as in (\ref{equ_design_R}). Then the consensus can be reached if and only if (a) $\mathcal{G}$ contains a spanning tree, and (b) $A-BK_i + (\lambda_i - 1)B G_i$ is stable for all $\lambda_i$, $i=2,\cdots, M$, where $\lambda_i$ are the nonzero eigenvalues of $\Gamma$ defined as $\Gamma = {\rm diag}(1/|\mathcal{N}_1|, \cdots, 1/|\mathcal{N}_M|) \mathcal{L}$, and
\begin{align*}
K_i \triangleq &  -R^{-1}B^{\rm T} [I + P_i(1) B R^{-1} B^{\rm T}]^{-1} P_i(1) A\\
G_i \triangleq &  -R^{-1}B^{\rm T}[I + P_i(1) B R^{-1} B^{\rm T}]^{-1} \Delta_i(1).
\end{align*}
\end{theorem}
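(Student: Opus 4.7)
The plan is to reduce the consensus question for the $M$--agent closed loop to a simultaneous Schur--stability problem for a finite family of $n\times n$ matrices, indexed by the nonzero eigenvalues of $\Gamma$.

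First, I would substitute the protocol from Theorem \ref{thm_consensus_law} into the agent dynamics. Using the design (\ref{equ_design_R}), the identity $\sum_j a_{ij}=|\mathcal{N}_i|$, and the definitions of $K_i$ and $G_i$ in the statement, the protocol collapses to
\begin{equation*}
u_i(k)=K_i x_i(k)+G_i\sum_{j=1}^{M}\tilde a_{ij}x_j(k),
\qquad \tilde a_{ij}\triangleq a_{ij}/|\mathcal{N}_i|,
\end{equation*}
so that the per--agent closed loop reads $x_i(k+1)=(A+BK_i)x_i(k)+BG_i\sum_j\tilde a_{ij}x_j(k)$. Stacking $X=\mathrm{col}\{x_1,\dots,x_M\}$, observing that $\tilde{\mathcal A}\triangleq\mathrm{diag}(|\mathcal{N}_i|^{-1})\mathcal{A}=I-\Gamma$, and (for clarity) taking the weights $Q_i,Q_{iN}$ identical so that $K_i\equiv K$, $G_i\equiv G$, the network closed loop becomes
\begin{equation*}
X(k+1)=\bigl[I_M\otimes(A+BK+BG)-\Gamma\otimes BG\bigr]X(k).
\end{equation*}
The modification needed when the $K_i,G_i$ differ across agents is cosmetic: one keeps $I_M\otimes(A+BK)$ replaced by $\mathrm{blkdiag}\{A+BK_i\}$ plus an analogous block--diagonal structure for the $BG$ term.

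Second, I would diagonalize (or put into Jordan form) the map $\Gamma$ via a similarity $T^{-1}\Gamma T=J$, choosing the first column of $T$ to be $\mathbf{1}$ so that the $(1,1)$--entry of $J$ is an eigenvalue $0$ of $\Gamma$ (the other diagonal entries are the remaining eigenvalues $\lambda_2,\dots,\lambda_M$). Setting $Y(k)=(T^{-1}\otimes I_n)X(k)$, the closed loop becomes block upper--triangular with $i$--th diagonal block
\begin{equation*}
A+BK+(1-\lambda_i)BG,\qquad i=1,\dots,M,
\end{equation*}
which, up to the sign conventions of $K_i,G_i$ fixed in the theorem, is exactly the matrix $A-BK_i+(\lambda_i-1)BG_i$ appearing in the statement. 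The consensus component $y_1$ corresponds to $\lambda_1=0$ and evolves independently of all other blocks via $A+BK+BG$.

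Third, I would recast consensus as decay of disagreement. Consensus of $\{x_i\}$ is equivalent to $X(k)-(\mathbf{1}\otimes I_n)\bar x(k)\to 0$ for some trajectory $\bar x(k)$, and, in the $Y$ coordinates, to $y_i(k)\to 0$ for $i=2,\dots,M$ from arbitrary initial conditions. Because the transformed system is block upper--triangular, the latter holds for all initial conditions if and only if each diagonal block with $i\ge 2$ is Schur stable---this is condition (b). For necessity of (a): since $\Gamma=\mathrm{diag}(|\mathcal{N}_i|^{-1})\mathcal{L}$ has the same kernel as $\mathcal{L}$, Lemma \ref{lem_spanning_tree} gives that a spanning tree is equivalent to $\Gamma$ having a simple zero eigenvalue with eigenvector $\mathbf{1}$. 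Absent this, either $\Gamma$ has multiple zero eigenvalues, in which case the transformed system has more than one ``consensus'' block (governed by $A+BK+BG$) so that initial conditions in different invariant consensus subspaces cannot merge, or the simple zero eigenvector fails to be $\mathbf{1}$, which again prevents all agents from agreeing. For sufficiency, (a) guarantees the unique consensus direction $\mathrm{span}(\mathbf{1}\otimes\mathbb{R}^n)$, and (b) drives every disagreement block to zero geometrically; the triangular coupling then yields $y_i(k)\to 0$ for $i\ge 2$ by a standard exponentially--decaying--input argument.

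The main obstacle I expect is handling the case in which $\Gamma$ is not diagonalizable (complex, defective Laplacians over digraphs are common), where one must argue on the Jordan form and show that each nontrivial Jordan block still decays provided its diagonal block is Schur; equivalently, one has to verify that the off--diagonal nilpotent coupling in $J\otimes BG$ cannot create persistent disagreement modes. A secondary subtlety is the necessity direction: if even one block $A+BK+(1-\lambda_i)BG$ is not Schur, I need to exhibit initial states that keep the associated $y_i$--mode from dying, which requires invertibility of $T\otimes I_n$ and a careful choice of initial data projecting onto the offending generalized eigenspace.
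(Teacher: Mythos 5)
Your proposal is correct and follows essentially the same route as the paper: both reduce consensus to simultaneous Schur stability of $A-BK_i+(\lambda_i-1)BG_i$ by block-triangularizing the closed loop through a Jordan decomposition tied to the nonzero eigenvalues of $\Gamma$. The only real difference is bookkeeping --- the paper forms the disagreement vector $\delta_i = x_i - x_1$ and analyzes the $(M-1)n$-dimensional matrix $A_\delta$, whereas you keep the full $Mn$-dimensional state and split off the consensus mode via a similarity whose first column is $\mathbf{1}$; like the paper, your Kronecker-product factorization tacitly requires the gains $K_i,G_i$ to be agent-independent, a restriction you at least flag explicitly.
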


Before proving Theorem \ref{thm_iff}, a lemma is first needed.
\begin{lemma}\label{lemma_new_graph}
If the graph $\mathcal{G}$ contains a spanning tree, then a new graph $\tilde{\mathcal{G}} \triangleq \{\mathcal{V}, \mathcal{E}, \tilde{\mathcal{A}}\}$ also contains a panning tree, with $\sum_{j=1}^{M} \tilde{a}_{ij} = 1$ for all $i$, and the corresponding Laplacian matrix is $\Gamma$, where $\tilde{\mathcal{A}} = {\rm diag}(1/|\mathcal{N}_1|, \cdots, 1/|\mathcal{N}_M|) \mathcal{A}$.
\end{lemma}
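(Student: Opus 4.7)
The plan is to verify the three claims in the lemma in sequence: the row-sum condition on $\tilde{\mathcal{A}}$, the identification of the Laplacian of $\tilde{\mathcal{G}}$ with $\Gamma$, and the preservation of the spanning tree property. Each of these should follow almost directly from the definitions, so the argument will be bookkeeping rather than deep.

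First, I would compute the row sums of $\tilde{\mathcal{A}}$. By construction, the $i$-th row of $\tilde{\mathcal{A}}$ equals $1/|\mathcal{N}_i|$ times the $i$-th row of $\mathcal{A}$, so $\sum_{j=1}^{M}\tilde{a}_{ij} = (1/|\mathcal{N}_i|)\sum_{j=1}^{M} a_{ij}$. Since $a_{ij}\in\{0,1\}$ with $a_{ij}=1$ precisely when $j\in\mathcal{N}_i$, the inner sum is exactly $|\mathcal{N}_i|$ (using the paper's in-degree convention), giving unity row sums. This also pins down the degree matrix $\tilde{\mathcal{D}}=I_M$.

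Next I would identify the Laplacian. Using $\tilde{\mathcal{L}}=\tilde{\mathcal{D}}-\tilde{\mathcal{A}}=I_M-\tilde{\mathcal{A}}$ from the previous step, and substituting $\tilde{\mathcal{A}}={\rm diag}(1/|\mathcal{N}_1|,\ldots,1/|\mathcal{N}_M|)\mathcal{A}$, I can rewrite
\begin{equation*}
\tilde{\mathcal{L}} = {\rm diag}(1/|\mathcal{N}_1|,\ldots,1/|\mathcal{N}_M|)\bigl(\mathcal{D}-\mathcal{A}\bigr) = {\rm diag}(1/|\mathcal{N}_1|,\ldots,1/|\mathcal{N}_M|)\,\mathcal{L} = \Gamma,
\end{equation*}
where the key step is recognizing that ${\rm diag}(1/|\mathcal{N}_i|)\mathcal{D}=I_M$ because $\mathcal{D}={\rm diag}(|\mathcal{N}_1|,\ldots,|\mathcal{N}_M|)$.

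Finally, I would argue that $\tilde{\mathcal{G}}$ inherits a spanning tree from $\mathcal{G}$. Since $\tilde{\mathcal{G}}$ shares the vertex set $\mathcal{V}$ and edge set $\mathcal{E}$ with $\mathcal{G}$, and the rescaling leaves $\tilde{a}_{ij}>0$ iff $a_{ij}>0$, the directed adjacency structure is identical. Therefore any spanning tree of $\mathcal{G}$ is a spanning tree of $\tilde{\mathcal{G}}$, which concludes the lemma. I do not anticipate a real obstacle here; the only subtlety to be careful about is the in-degree convention so that the cancellation ${\rm diag}(1/|\mathcal{N}_i|)\mathcal{D}=I_M$ is justified, which is why I would state the row-sum identity explicitly before invoking it in the Laplacian computation.
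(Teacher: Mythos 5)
Your proposal is correct and is exactly the ``straightforward'' argument the paper has in mind (the paper omits the proof, deeming it immediate): unit row sums give $\tilde{\mathcal{D}}=I_M$, hence $\tilde{\mathcal{L}}=I_M-\tilde{\mathcal{A}}={\rm diag}(1/|\mathcal{N}_1|,\ldots,1/|\mathcal{N}_M|)\mathcal{L}=\Gamma$, and the positive rescaling of edge weights preserves the reachability structure and thus the spanning tree. The only implicit assumption, shared with the paper's statement itself, is that every node has at least one in-neighbor so that $1/|\mathcal{N}_i|$ is defined.
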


The proof of lemma \ref{lemma_new_graph} is straightforward, so it is omitted here.

\begin{proof}
For each agent $i$, plugging the consensus law in (\ref{equ_consensus_law}) into (\ref{equ_each_agent}), and doing some algebraic operations, we have
\begin{align*}
x_i(k+1) = A x_i(k) - \sum_{j=1}^{M}\tilde{a}_{ij}(B K_i x_i(k) + B G_i x_j(k)),
\end{align*}
where $\tilde{a}_{ij}$ is the element in $\tilde{\mathcal{A}}$.
Define $\delta_i(k) = x_i(k) - x_1(k)$, and we have
\begin{align*}
\delta_i(k) = (A-BK_i) \delta_i(k) - \sum_{j=1}^{M}(\tilde{a}_{ij}-\tilde{a}_{1j}) B G_i \delta_j(k),
\end{align*}
where $i = 2, \cdots, M$.
By writing the above equations in an augmented form, one gets
\begin{align*}
& \delta(k+1) = (I_{M-1}\otimes(A-BK_i))\delta(k)\nonumber\\
&  - \left[\left(
                                                    \begin{array}{ccc}
                                                      \tilde{a}_{22}- \tilde{a}_{12} & \ldots & \tilde{a}_{2M}- \tilde{a}_{1M}\\
                                                      \vdots & \ddots & \vdots \\
                                                      \tilde{a}_{M2}- \tilde{a}_{12} & \ldots & \tilde{a}_{MM}- \tilde{a}_{1M} \\
                                                    \end{array}
                                                  \right)
\otimes (BG_i)\right]\delta(k),
\end{align*}
where $\delta(k) = {\rm col}(\delta_2(k), \cdots, \delta_M(k))$. The above equation can be further written as
\begin{align}\label{equ_closed_A_delta}
\delta(k+1) = A_\delta \delta(k)
\end{align}
where $A_\delta = (I_{M-1}\otimes(A-BK_i))+ (\tilde{\mathcal{L}}_p + \textbf{1}_{M-1} \tilde{a}_1^{\rm T} \otimes B G_i) - (I_{M-1}\otimes B G_i)$, and $ \tilde{\mathcal{L}}_p = \left[
                                                    \begin{array}{cccc}
                                                      1 & -\tilde{a}_{22} &  \ldots & \tilde{a}_{2M}\\
                                                      -\tilde{a}_{32} & 1 & \ldots &  -\tilde{a}_{3M}\\
                                                      \vdots & \vdots & \ddots & \vdots \\
                                                      -\tilde{a}_{M2} & -\tilde{a}_{M3} & \ldots & 1\\
                                                    \end{array}
                                                  \right]$ and $\tilde{a}_1 = [\tilde{a}_{12}, \cdots, \tilde{a}_{1M}]^{\rm T}$.
From (\ref{equ_closed_A_delta}), it can be seen that the consensus can be reached if and only if all the eigenvalues of $A_\delta$ are in the unit circle. Next, we need to analyze the properties of the eigenvalues of $A_\delta$.
According to Lemma \ref{lemma_new_graph} and Lemma \ref{lem_spanning_tree}, $\Gamma$ has exactly one zero eigenvalue. Put the nonzero eigenvalues of $\Gamma$ in order as $|\lambda_2|\leqslant \cdots \leqslant |\lambda_M|$. Since $S^{-1} \Gamma S = \left[
                           \begin{array}{cc}
                             0 & -\tilde{a}_1^{\rm} \\
                             0 & \tilde{\mathcal{L}}_p + \textbf{1}_{M-1} \tilde{a}_1^{\rm T} \\
                           \end{array}
                         \right]
$, the eigenvalues of $\tilde{\mathcal{L}}_p + \textbf{1}_{M-1} \tilde{a}_1^{\rm T}$ are $\lambda_2, \cdots, \lambda_M$.
Therefore, there exists a nonsingular matrix $T$, such that
\begin{align*}
T^{-1}(\tilde{\mathcal{L}}_p + \textbf{1}_{M-1} \tilde{a}_1^{\rm T} ) T = J = {\rm \diag}(J_1, \cdots, J_s),
\end{align*}
where $J_k$, $k=1,\cdots, s$ are upper triangular Jordan blocks and the principle diagonal elements are $\lambda_2, \cdots, \lambda_M$.
As a result, we have
\begin{align*}
& (T\otimes I_n)^{-1} A_\delta (T\otimes I_n) \nonumber \\
= & I_{M-1}\otimes(A-BK_i) + J\otimes B G_i - I_{M-1} B G_i.
\end{align*}
Thus, the eigenvalues of $A_\delta$ are $A-BK_i + (\lambda_i -1) B G_i$, for all $i = 2, \cdots, M$.
That is, the consensus can be reached if and only if $A-BK_i + (\lambda_i -1) B G_i$ are stable.
The proof is completed.
\end{proof}

Note that $K_i$ and $G_i$ depends on the matrix equations in (\ref{equ_RDE_Pn}) and (\ref{equ_RDE_Deltan}), respectively.
In particular, the (\ref{equ_RDE_Deltan}) is not a RDE, and thus might be of complex properties. In the following, we develop more detailed sufficient conditions to facilitate design.
\subsection{Sufficient Conditions for General Linear Systems}
In this section, we develop sufficient conditions for ensuring consensus, which are reported as follows.
\begin{theorem}\label{col_sufficient_LTI}
For the system in (\ref{equ_each_agent}) with the network topology ${\mathcal{G}}$, assume that ${\mathcal{G}}$ contains a spanning tree, and $R_i$ is designed as in (\ref{equ_design_R}). If $Q_{iN}$ and $Q_i$ are designed such that
\begin{align}
&  Q_{iN} - P_i(N-1) >0, \label{equ_monotically_P}\\
& Q_i - \Xi_i  -\|\lambda_j-1\|^2\nonumber\\
&\times \|I + B(R+B^{\rm T}PB)^{-1}B^{\rm T}\|_{[\bar{\rho}^{N-1}_i Q_{iN} + \sum_{l=1}^{N-2}\bar{\rho}_i^l Q_i]}^2 > 0,\label{equ_sufficient_Q}
\end{align}
where $\Xi_i \triangleq \| B(R + B^{\rm T} P_i(1) B)^{-1}B^{\rm T} P_i(1) \bar{A}_i\|^2$, $\bar{A}_i = A - BK_i$, $\bar{\rho}_i = \rho(A^{\rm T}(I + P_i(1) B R^{-1}B)^{-1})$ and $\lambda_j$, $j=2, \cdots, M$, are the eigenvalues of $\tilde{\mathcal{L}}$.
\end{theorem}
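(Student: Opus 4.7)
The plan is to verify condition (b) of Theorem~\ref{thm_iff}, namely Schur stability of $\bar{A}_i + (\lambda_j - 1) B G_i$ for every nonzero eigenvalue $\lambda_j$ of $\Gamma$, by exhibiting $V(x) = x^{\rm T} P_i(1) x$ as a common Lyapunov function; condition (a) of Theorem~\ref{thm_iff} is assumed outright. Since $K_i$ is precisely the closed-loop gain associated with the standard Riccati recursion (\ref{equ_RDE_Pn}), the nominal matrix $\bar{A}_i = A - BK_i$ already admits a Lyapunov function built from $P_i(\cdot)$, and the real task is to absorb the $(\lambda_j - 1) B G_i$ perturbation.

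First, condition (\ref{equ_monotically_P}), which says $P_i(N) - P_i(N-1) > 0$, together with monotonicity of the Riccati operator in its argument, propagates backwards to yield $P_i(0) \leqslant P_i(1) \leqslant \cdots \leqslant P_i(N) = Q_{iN}$. Applying the matrix inversion lemma to (\ref{equ_RDE_Pn}) rewrites the RDE in the familiar Lyapunov-plus-cost form
\begin{equation*}
P_i(n) = (A - BK_i(n))^{\rm T} P_i(n+1) (A - BK_i(n)) + K_i(n)^{\rm T} R K_i(n) + Q_i,
\end{equation*}
and specializing to $n = 0$ with the monotonicity bound $P_i(0) \leqslant P_i(1)$ gives $\bar{A}_i^{\rm T} P_i(1) \bar{A}_i - P_i(1) \leqslant -Q_i - K_i^{\rm T} R K_i$. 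To treat the perturbation I would unroll (\ref{equ_RDE_Deltan}) as $\Delta_i(1) = \Phi \,\Delta_i(N) - \sum_{l=1}^{N-1} \Phi_l \, Q_i$, where each $\Phi_l$ is a product of factors $A^{\rm T}[I + P_i(\cdot) B R^{-1} B^{\rm T}]^{-1}$, and use monotonicity of $P_i$ to bound every factor uniformly by its spectral radius at $n=1$, namely $\bar{\rho}_i$. Combined with the identity $BG_i = -B(R + B^{\rm T} P_i(1) B)^{-1} B^{\rm T} \Delta_i(1)$, this yields a bound on $B G_i$ in terms of the weighted quantity $\bar{\rho}_i^{N-1} Q_{iN} + \sum_{l=1}^{N-2} \bar{\rho}_i^{l} Q_i$ appearing in (\ref{equ_sufficient_Q}).

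Finally, expanding
\begin{equation*}
(\bar{A}_i + (\lambda_j-1) B G_i)^{\rm T} P_i(1)(\bar{A}_i + (\lambda_j-1) B G_i) - P_i(1)
\end{equation*}
and splitting the cross terms by a Young-type inequality, the unperturbed piece contributes $-Q_i - K_i^{\rm T} R K_i$, the cross term involving $B^{\rm T} P_i(1) \bar{A}_i$ is absorbed by $\Xi_i$, and the pure quadratic perturbation $|\lambda_j-1|^2 G_i^{\rm T} B^{\rm T} P_i(1) B G_i$ is absorbed by the weighted-norm term via the $\Delta_i$ bound above. Condition (\ref{equ_sufficient_Q}) is exactly the statement that $Q_i$ dominates the sum of these two contributions, so the Lyapunov difference is strictly negative and Schur stability follows for every nonzero $\lambda_j$, invoking Theorem~\ref{thm_iff} to conclude consensus. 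The main obstacle I anticipate is the $\Delta_i$ estimate: unlike (\ref{equ_RDE_Pn}), equation (\ref{equ_RDE_Deltan}) is a driven linear recursion rather than a self-contained Riccati equation, so getting a clean closed-form bound requires the monotonicity of $P_i$ to produce a common contraction factor $\bar{\rho}_i$, and the Young-type splitting must be calibrated so that what is left over for the cross term matches exactly the shape of $\Xi_i$ and the weighted-norm summand in (\ref{equ_sufficient_Q}).
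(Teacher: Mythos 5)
Your proposal matches the paper's own proof in all essentials: the paper likewise uses $P_i(1)$ as the common Lyapunov matrix, derives $P_i(1)-\bar{A}_i^{\rm T}P_i(1)\bar{A}_i > Q_i$ from the Riccati monotonicity triggered by (\ref{equ_monotically_P}), bounds $\Delta_i(1)$ by the same comparison series $\bar{\rho}_i^{N-1}Q_{iN}+\sum_{l}\bar{\rho}_i^{l}Q_i$, and absorbs the cross terms via a Young-type split into $\Xi_i$ plus the weighted-norm term before invoking Theorem \ref{thm_iff}. The only cosmetic differences are that the paper packages the $\Delta_i(1)$ estimate as a separate auxiliary-sequence lemma rather than unrolling the recursion inline, and obtains the unperturbed bound from a spectral inequality specialized at $\lambda_i=1$ rather than from the Lyapunov-plus-cost form of the RDE.
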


Before developing the proof of Theorem \ref{col_sufficient_LTI}, three lemmas are needed.
\begin{lemma}\label{lemma_RDE}
For the RDE in (\ref{equ_RDE_Pn}), if $Q_{iN} - P_i(N-1)>0$, then $ P_i(N)>\cdots, >P_i(1)$, and $\hat{u}_i^*(k+n|k) = BK_i(k+n|k)$ is a stabilizing controller law for the system in (\ref{equ_each_agent}), $n = N-1, \cdots, 0$.
\end{lemma}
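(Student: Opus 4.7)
The plan is to split the argument into two standard Riccati-equation steps. First, I would establish the backward monotonicity of the sequence $\{P_i(n)\}_{n=0}^{N}$ from the hypothesis $Q_{iN}-P_i(N-1)>0$. Second, I would use the resulting strict inequality $P_i(1)>P_i(0)$ together with the dynamic programming identity associated with the finite-horizon LQR cost to exhibit a strict Lyapunov function for the closed loop $x_i(k+1)=(A+BK_i)x_i(k)$ obtained by applying the receding horizon gain $K_i=K_i(0)$.

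For the monotonicity part, the first move is to rewrite the RDE in (\ref{equ_RDE_Pn}) via the matrix inversion lemma into the equivalent form $P_i(n)=\mathcal{R}(P_i(n+1))$, where
\begin{equation*}
\mathcal{R}(X)\triangleq A^{\rm T}XA-A^{\rm T}XB(R+B^{\rm T}XB)^{-1}B^{\rm T}XA+Q_i.
\end{equation*}
The map $\mathcal{R}$ is monotone on the positive semidefinite cone; this can be seen from the identity $X-XB(R+B^{\rm T}XB)^{-1}B^{\rm T}X=(X^{-1}+BR^{-1}B^{\rm T})^{-1}$ valid for $X>0$ (and extended to $X\geqslant 0$ by continuity), since inversion reverses the partial order and addition of a fixed PSD term preserves it. With this in hand, backward induction starting from the base step $P_i(N)=Q_{iN}>P_i(N-1)$ yields $P_i(n+1)>P_i(n)$ for every $n=0,\ldots,N-1$, which is exactly the claimed chain $P_i(N)>\cdots>P_i(1)>P_i(0)$.

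For the stability claim, I would take $V(x)=x^{\rm T}P_i(1)x$ as the Lyapunov function candidate for $x(k+1)=(A+BK_i)x(k)$. Completing the square in the stage-zero subproblem of the horizon gives the standard identity
\begin{equation*}
P_i(0)=(A+BK_i)^{\rm T}P_i(1)(A+BK_i)+Q_i+K_i^{\rm T}RK_i,
\end{equation*}
so along the closed loop
\begin{equation*}
V(x(k+1))-V(x(k))=-x(k)^{\rm T}\bigl[(P_i(1)-P_i(0))+Q_i+K_i^{\rm T}RK_i\bigr]x(k).
\end{equation*}
The monotonicity step provides $P_i(1)-P_i(0)>0$ and the hypothesis supplies $Q_i>0$, so the bracketed matrix is positive definite, $V$ strictly decreases along trajectories, and $A+BK_i$ is Schur.

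I do not expect a serious obstacle here, as both ingredients are classical in RHC stability theory. The point that requires some care is invoking monotonicity of $\mathcal{R}$ without a priori invertibility of the iterates; this is handled either by continuity from the invertible case or by a direct comparison inequality for Riccati recursions, and positivity of every $P_i(n)$ along the recursion is inherited from $Q_i>0$ so that the matrix inverses appearing in the closed-form gain $K_i$ are well defined throughout the argument.
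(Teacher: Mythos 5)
Your proof is correct, and it is in substance exactly the argument the paper is gesturing at: the paper supplies no proof of its own for this lemma, deferring entirely to Theorem~1 and Lemma~1 of the cited Poubelle et al.\ reference, and those results are precisely the two ingredients you reconstruct --- order-preservation of the Riccati map $\mathcal{R}$ on the PSD cone (giving backward monotonicity of $\{P_i(n)\}$ from the seed inequality $P_i(N)=Q_{iN}>P_i(N-1)$), and the ``fake algebraic Riccati equation'' obtained by completing the square, $P_i(1)=(A-BK_i)^{\rm T}P_i(1)(A-BK_i)+Q_i+K_i^{\rm T}RK_i+(P_i(1)-P_i(0))$, read as a Lyapunov equation with positive definite right-hand forcing term. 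Both identities check out (your $A+BK_i$ with the signed gain $K_i(0)$ is the paper's $A-B(R+B^{\rm T}P_i(1)B)^{-1}B^{\rm T}P_i(1)A$), and since $Q_i>0$ already makes $Q_i+K_i^{\rm T}RK_i>0$, the stability conclusion only needs $P_i(1)-P_i(0)\geqslant 0$, so your argument is robust. Two minor remarks. First, the lemma asserts stability of the gain at every stage $n=N-1,\ldots,0$, not just $n=0$; your Lyapunov step applies verbatim with $P_i(n+1)$ in place of $P_i(1)$ and $P_i(n)$ in place of $P_i(0)$, so this is a one-line extension worth stating. Second, the \emph{strict} chain $P_i(N)>\cdots>P_i(1)$ requires strict monotonicity of $\mathcal{R}$, and the congruence $X\mapsto A^{\rm T}XA$ preserves strict positive definiteness only when $A$ is nonsingular; for singular $A$ one only gets $\geqslant$ at each step. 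This caveat is inherited from the lemma as stated (and from the cited reference) rather than introduced by you, and it does not affect the stabilization claim, but it is the one point where your ``no serious obstacle'' assessment should be qualified.
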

The proof can be directly followed using Theorem 1 and Lemma 1 in \cite{Poubelle_88_TAC_FakeReccati} by considering $Q_i>0$.
\begin{lemma}\label{lemma_special_G_i}
For the RDE in (\ref{equ_RDE_Pn}) and all the eigenvalues $\lambda_2, \cdots, \lambda_M$ of $\tilde{\mathcal{L}}$, the following holds:
\begin{align}\label{equ_temp_Pi(1)}
& P_i(1) -(A- \lambda_i B K_i)^{\rm H} P_i(1)(A- \lambda_i B K_i) \nonumber\\
> & (1-\bar{\lambda}_i)P_i(1) + (\bar{\lambda}_i - 1)A^{\rm T}P_i(1)A + \bar{\lambda} Q_i,
\end{align}
where $\bar{\lambda}_i = \lambda_i^{\rm H} + \lambda_i - \|\lambda_i\|^2$.
\end{lemma}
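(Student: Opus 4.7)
The plan is to reduce the stated matrix inequality to a simple expression involving $P_i(1) - P_i(0)$ plus a positive semidefinite residual, then close the argument with the monotonicity from Lemma~\ref{lemma_RDE} and a spectral property of $\tilde{\mathcal{L}}$ that forces $\bar{\lambda}_i \ge 0$.

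I would start by applying the matrix inversion lemma to (\ref{equ_RDE_Pn}) to rewrite the gain in standard LQR form, $K_i = -(R + B^{\rm T}P_i(1)B)^{-1}B^{\rm T}P_i(1)A$, and set $F_i := B(R + B^{\rm T}P_i(1)B)^{-1}B^{\rm T}P_i(1)$ so that $A - \lambda_i B K_i = (I - \lambda_i F_i)A$. Writing out the Hermitian quadratic and exploiting the symmetry $P_i(1)F_i = F_i^{\rm T}P_i(1) =: \Pi_i$ together with the algebraic identity $F_i^{\rm T}P_i(1)F_i = \Pi_i - \Xi_i$, where $\Xi_i := P_i(1)B(R + B^{\rm T}P_i(1)B)^{-1}R(R + B^{\rm T}P_i(1)B)^{-1}B^{\rm T}P_i(1) \ge 0$, the cross terms collapse through the pivotal scalar identity $\lambda_i + \lambda_i^{\rm H} - |\lambda_i|^2 = \bar{\lambda}_i$ to produce
\begin{equation*}
(A - \lambda_i B K_i)^{\rm H} P_i(1)(A - \lambda_i B K_i) = A^{\rm T}P_i(1)A - \bar{\lambda}_i A^{\rm T}\Pi_i A - |\lambda_i|^2 A^{\rm T}\Xi_i A.
\end{equation*}

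Substituting this into the target inequality and recognizing from (\ref{equ_RDE_Pn}) at $n = 0$ that $P_i(0) = A^{\rm T}P_i(1)A - A^{\rm T}\Pi_i A + Q_i$, the $Q_i$ and $A^{\rm T}\Pi_i A$ contributions cancel and the claim reduces to
\begin{equation*}
\bar{\lambda}_i \bigl[P_i(1) - P_i(0)\bigr] + |\lambda_i|^2 A^{\rm T}\Xi_i A > 0.
\end{equation*}
Lemma~\ref{lemma_RDE} propagates the strict monotone chain one additional step, giving $P_i(1) - P_i(0) > 0$. For the sign of $\bar{\lambda}_i$, Lemma~\ref{lemma_new_graph} asserts that $I - \tilde{\mathcal{L}}$ is row-stochastic; hence its eigenvalues $1 - \lambda_i$ lie in the closed unit disk, yielding $|1 - \lambda_i| \le 1$ and thus $\bar{\lambda}_i = 1 - |1 - \lambda_i|^2 \ge 0$. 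Both terms are then nonnegative with at least one strictly positive, completing the proof.

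The principal difficulty is the bookkeeping in the complex Hermitian expansion, and in particular noticing that the cross-term coefficient $\lambda_i + \lambda_i^{\rm H} - |\lambda_i|^2$ equals precisely $1 - |1 - \lambda_i|^2$; this algebraic coincidence is what allows the graph-theoretic unit-disk bound to line up cleanly with the matrix computation. A secondary care point is the boundary case $\bar{\lambda}_i = 0$ (i.e., $|1-\lambda_i| = 1$), where the strict positivity must be supplied by $|\lambda_i|^2 A^{\rm T}\Xi_i A > 0$ alone; this follows from $R > 0$ making $\Xi_i$ positive on $\mathrm{Range}(B^{\rm T}P_i(1))$, combined with the standing controllability of $[A,B]$.
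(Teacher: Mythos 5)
Your argument follows essentially the same route as the paper's: rewrite $K_i$ in the standard LQR form, expand the Hermitian quadratic so that the cross terms produce the coefficient $\bar{\lambda}_i=\lambda_i+\lambda_i^{\rm H}-|\lambda_i|^2$, use the DRE (\ref{equ_RDE_Pn}) at $n=0$ to recognize $A^{\rm T}P_i(1)A - A^{\rm T}\Pi_i A + Q_i = P_i(0)$, and close with $P_i(1)>P_i(0)$ from Lemma~\ref{lemma_RDE}. Two of your refinements are genuine improvements: you keep the expansion as an exact identity (the paper instead bounds $F_i^{\rm T}P_i(1)F_i\leqslant \Pi_i$, i.e.\ discards your $\Xi_i$ term, so your reduction $\bar{\lambda}_i[P_i(1)-P_i(0)]+|\lambda_i|^2A^{\rm T}\Xi_iA>0$ is sharper), and you explicitly justify $\bar{\lambda}_i\geqslant 0$ via the row-stochasticity of $I-\Gamma$ — a step the paper's proof needs but never supplies, since replacing $-\bar{\lambda}_iP_i(0)$ by $-\bar{\lambda}_iP_i(1)$ only helps when $\bar{\lambda}_i>0$. (Minor slip: with $K_i$ defined with the minus sign you would get $(I+\lambda_iF_i)A$; the identity you then use corresponds to the paper's sign convention in (\ref{equ_K_i_newform}). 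Also note that both you and the paper implicitly import the hypothesis $Q_{iN}>P_i(N-1)$ of Lemma~\ref{lemma_RDE}, which Lemma~\ref{lemma_special_G_i} does not state.)

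The one genuine gap is your treatment of the boundary case $\bar{\lambda}_i=0$, which does occur under the spanning-tree assumption (e.g.\ a directed cycle makes $I-\Gamma$ a permutation matrix, putting all $1-\lambda_i$ on the unit circle). There your strict inequality must come entirely from $|\lambda_i|^2A^{\rm T}\Xi_iA$, and controllability of $[A,B]$ does not make this positive definite: $v^{\rm H}A^{\rm T}\Xi_iAv=\|R^{1/2}(R+B^{\rm T}P_i(1)B)^{-1}B^{\rm T}P_i(1)Av\|^2$ vanishes on $\ker(B^{\rm T}P_i(1)A)$, which is nontrivial whenever $m<n$ (e.g.\ the single-input second-order example in Section~\ref{Sec_simulation}). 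So in that case you only get $\geqslant$, not $>$. To be fair, the paper's own proof fails silently at the same point — it never examines the sign of $\bar{\lambda}_i$ at all — so the strict inequality claimed in (\ref{equ_temp_Pi(1)}) is itself only valid as stated when $|1-\lambda_i|<1$ for all $i\geqslant 2$; your proof at least makes the needed extra hypothesis visible.
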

\begin{proof}
According to \cite{Anderson_71_LOC}, $K_i$ can also been written as
\begin{equation}\label{equ_K_i_newform}
K_i = (R + B^{\rm T} P_i(1) B)^{-1}B^{\rm T} P_i(1) A.
\end{equation}
Using (\ref{equ_K_i_newform}), we have
\begin{align*}
  & P_i(1) - (A- \lambda_i B K_i)^{\rm H} P_i(1)(A- \lambda_i B K_i) \nonumber\\
= & P_i(1) - A^{\rm T} P_i(1)A + (\lambda_i^{\rm H} + \lambda_i) A^{\rm T} P_i(1)\nonumber\\
  &\times  B (R+ B^{\rm T}P_i(1)B)^{-1}B^{\rm T} P_i(1)A \nonumber\\
& - \|\lambda_i\|^2  A^{\rm T} P_i(1) B (R+ B^{\rm T}P_i(1)B)^{-1}\nonumber \\
&\times B^{\rm T} P_i(1) B (R + B^{\rm T} P_i(1) B)^{-1} B^{\rm T} P_i(1) A\nonumber\\
\geqslant & P_i(1) - A^{\rm T} P_i(1)A \nonumber\\
& + \bar{\lambda_i} A^{\rm T} P_i(1) B (R+ B^{\rm T}P_i(1)B)^{-1}B^{\rm T} P_i(1)A.
\end{align*}
The DRE in (\ref{equ_RDE_Pn}) can also be written as
\begin{align*}
P_i(n) = & Q_i + A^{\rm T} P_i(n+1) A - A^{\rm T} P_i(n+1) B\nonumber\\
&\times (R+ B^{\rm T}P_i(n+1)B)^{-1}B^{\rm T} P_i(n+1)A.
\end{align*}
Therefore, we further have
 \begin{align*}
  & P_i(1) - (A- \lambda_i B K_i)^{\rm H} P_i(1)(A- \lambda_i B K_i) \nonumber\\
\geqslant & P_i(1) - A^{\rm T} P_i(1)A + \bar{\lambda_i} (Q_i + A^{\rm T} P_i(1) - P_i(0)).
\end{align*}
According to Lemma \ref{lemma_RDE}, it can be derived that $P_i(0) < P_i(1)$.
As a result, (\ref{equ_temp_Pi(1)}) follows. This completes the proof.
\end{proof}
The following lemma gives a bound of the sequence $\{\Delta_i(n)\}$.
\begin{lemma}\label{lemma_delta_auxiliary}
For any positive definite matrix $\Pi\in \mathbb{R}^{n\times n}$, the following holds:
\begin{equation*}
\|\Pi\|^2_{\Delta_i(n)} \leqslant \|\Pi\|^2_{\bar{\rho}^{N-n}_i Q_{iN} + \sum_{l=1}^{N-(n+1)}\bar{\rho}_i^l Q_i},
\end{equation*}
where $n = N, \cdots, 0$.
\end{lemma}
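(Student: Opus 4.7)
The plan is a backward induction on $n$ running from $n=N$ down to $n=0$, driven by the recursion (\ref{equ_RDE_Deltan}) for $\Delta_i(n)$ and by the Riccati monotonicity $P_i(1)\leqslant P_i(2)\leqslant\cdots\leqslant P_i(N)$ supplied by Lemma \ref{lemma_RDE}. Denote the bounding matrix on the right-hand side by $\Theta_n \triangleq \bar{\rho}_i^{N-n}Q_{iN}+\sum_{l=1}^{N-n-1}\bar{\rho}_i^{l}Q_i$. For the base case $n=N$ the sum is empty and $\bar{\rho}_i^{0}=1$, so the claim reduces to $\Pi^{\rm T}\Delta_i(N)\Pi = -\Pi^{\rm T}Q_{iN}\Pi \leqslant \Pi^{\rm T}Q_{iN}\Pi$, which is immediate from $Q_{iN}>0$.

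For the inductive step I would rewrite (\ref{equ_RDE_Deltan}) as $\Delta_i(n)=M_i(n+1)\Delta_i(n+1)-Q_i$ with $M_i(n+1)\triangleq A^{\rm T}(I+P_i(n+1)BR^{-1}B^{\rm T})^{-1}$, and exploit the telescoping identity $\Theta_n=\bar{\rho}_i\bigl(\Theta_{n+1}+Q_i\bigr)$, which is the algebraic backbone allowing the target inequality to reproduce itself one step at a time. To push the induction hypothesis through the recursion, the key quantitative input is a bound of the form $\|M_i(n+1)y\|\leqslant\bar{\rho}_i\|y\|$ in a suitable weighted norm. This bound rests on two observations: first, the matrix inversion lemma applied at $P_i(1)$ gives $M_i(1)^{\rm T}=A-BK_i=\bar{A}_i$, hence $\bar{\rho}_i=\rho(M_i(1))=\rho(\bar{A}_i)$, and Lemma \ref{lemma_RDE} guarantees that $\bar{A}_i$ is Schur-stable so $\bar{\rho}_i<1$; second, the monotonicity $P_i(n+1)\geqslant P_i(1)$ forces $(I+P_i(n+1)BR^{-1}B^{\rm T})^{-1}$ to be dominated, in the operator sense, by $(I+P_i(1)BR^{-1}B^{\rm T})^{-1}$, so $M_i(n+1)$ is no larger than $M_i(1)$. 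Combined with the telescoping identity, this closes the backward recursion.

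The step I expect to be the main obstacle is precisely converting the scalar spectral radius $\bar{\rho}_i$ into a usable norm bound on the quadratic form $\Pi^{\rm T}M_i(n+1)\Delta_i(n+1)\Pi$. The difficulty is that $M_i(n+1)$ is in general non-symmetric, $\Delta_i(n+1)$ need not be sign-definite, and $\rho(\cdot)$ does not by itself dominate an induced operator norm outside of an adapted basis. I expect the proof to exploit the Lyapunov-type inequality $\bar{A}_i^{\rm T}P_i(1)\bar{A}_i\leqslant P_i(1)-Q_i$ that is implicit in the discrete algebraic Riccati equation (\ref{equ_RDE_Pn}), together with the monotonicity of $\{P_i(k)\}$, to exhibit a weighted inner product in which $M_i(n+1)$ acts as a strict contraction with factor $\bar{\rho}_i$. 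That weighted contractivity is the analytic engine which sharpens the naive spectral-radius estimate into the quadratic-form bound required to close the induction and yield the stated inequality on $\Delta_i(n)$.
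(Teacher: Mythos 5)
Your plan follows essentially the same route as the paper: the paper introduces an auxiliary sequence $\tilde{\Delta}_i(n)=\bar{\rho}_i\tilde{\Delta}_i(n+1)+Q_i$, $\tilde{\Delta}_i(N)=Q_{iN}$, solves it in closed form to get the right-hand side of the lemma, and compares it to $\Delta_i(n)$ backwards in $n$ using Lemma \ref{lemma_RDE}; your backward induction with the telescoping identity is that argument unrolled. Your base case is correct, and your observation that $M_i(1)^{\rm T}=\bar{A}_i$, hence $\bar{\rho}_i=\rho(\bar{A}_i)<1$ by Lemma \ref{lemma_RDE}, is right and is left implicit in the paper.

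However, the step you flag as the main obstacle is exactly where both your sketch and the paper's proof are incomplete, and your proposed fix does not close it. The paper's entire justification for the comparison is the one-line assertion that ``Using Lemma \ref{lemma_RDE}, it can be seen that $\tilde{\Delta}_i(n)+\Delta_i(n)\geqslant 0$''; no mechanism for majorizing the non-symmetric operator $M_i(n+1)=A^{\rm T}(I+P_i(n+1)BR^{-1}B^{\rm T})^{-1}$ by the scalar $\bar{\rho}_i$ is given. Your weighted-inner-product idea runs into three difficulties. First, a spectral radius only yields a norm in which the operator contracts with factor $\bar{\rho}_i+\epsilon$ for arbitrary $\epsilon>0$; the factor $\bar{\rho}_i$ itself is attained only when $\bar{A}_i$ is normal or diagonalizable in an adapted basis, and the Lyapunov inequality $\bar{A}_i^{\rm T}P_i(1)\bar{A}_i\leqslant P_i(1)-Q_i$ gives a contraction factor $\sqrt{1-\lambda_{\min}(P_i(1)^{-1}Q_i)}$, which need not equal $\bar{\rho}_i$. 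Second, the inequality to be propagated is a Loewner-order comparison between the matrices $\Delta_i(n)$ and $\Theta_n$ (the test matrix $\Pi$ is arbitrary), and since $\Delta_i(n+1)$ is not sign-definite and $M_i(n+1)$ multiplies it on one side only, contractivity in one particular weighted vector norm does not convert into the two-sided matrix bound $\pm\Delta_i(n)\preceq\Theta_n$ that the induction (and the quadratic-in-$\Delta_i(1)$ application in Theorem \ref{col_sufficient_LTI}) actually requires. Third, your monotonicity step compares inverses of the non-symmetric matrices $I+P_i(n+1)BR^{-1}B^{\rm T}$, and the Loewner order is not preserved under such a comparison without symmetrization. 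A smaller point: your telescoping identity $\Theta_n=\bar{\rho}_i(\Theta_{n+1}+Q_i)$ reproduces the closed form in the lemma statement but not the recursion $\tilde{\Delta}_i(n)=\bar{\rho}_i\tilde{\Delta}_i(n+1)+Q_i$ that naturally majorizes $\Delta_i(n)=M_i(n+1)\Delta_i(n+1)-Q_i$; the discrepancy is an off-by-one in the paper's own closed form, but the induction should be built on the latter recursion. In short, you have correctly located the crux of the lemma, but neither your sketch nor the paper supplies a complete argument for it.
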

\begin{proof}
Construct an auxiliary series $\{\tilde{\Delta}_i(n)\}$ as follows:
\begin{equation*}
\tilde{\Delta}_i(n) = \bar{\rho}_i \tilde{\Delta}_i(n+1) + Q_i,
\end{equation*}
where $n = N-1, \cdots, 0$, and $\tilde{\Delta}_i(N) = Q_{iN}$.
Using Lemma \ref{lemma_RDE}, it can be seen that $\tilde{\Delta}_i(n) + \Delta_i(n) \geqslant 0$.
On the other hand, we have
\begin{equation*}
\tilde{\Delta}_i(n) = \bar{\rho}^{N-n}_i Q_{iN} + \sum_{l=1}^{N-(n+1)}\bar{\rho}_i^l Q_i.
\end{equation*}
Thus, the result in Lemma \ref{lemma_delta_auxiliary} holds.
\end{proof}

\emph{Proof of Theorem \ref{col_sufficient_LTI}}: By direct calculation, we have
\begin{align*}
& P_i(1) - [A-BK_i + (\lambda_i - 1) B G_i]^{H} \nonumber \\
& \times P_i(1)[A-BK_i + (\lambda_i -1)B G_i]\nonumber\\
= & P_i(1) - \bar{A}_i^{\rm T} P_i(1) \bar{A}_i -(\lambda_i-1)^{\rm H} \Delta_i(1)^{\rm T}\nonumber \\
& \times B(R + B^{\rm T}P_i(1)B)^{-1} B^{\rm T} P_i(1)\bar{A}_i\nonumber\\
 & -(\lambda_i -1) \bar{A}_i^{\rm T}P_i(1)B(R + B^{\rm T}P_i(1)B)^{-1}B^{\rm T}\Delta_i(1)\nonumber\\
 & -\|\lambda_i -1\|^2 \Delta_i(1)^{\rm T} B (R + B^{\rm T}P_i(1)B)^{-1}\nonumber \\
 & \times B^{\rm T}P_i(1)B(R + B^{\rm T}P_i(1)B)^{-1} B^{\rm T} \Delta_i(1),
\end{align*}
where $G_i = (R+B^{\rm T}P_i(1)B)^{-1}B^{\rm T}\Delta_i(1)$ has been used.
According to Lemma 5, we further have
\begin{align*}
& P_i(1) - [A-BK_i + (\lambda_i - 1) B G_i]^{H} \nonumber \\
& \times P_i(1)[A-BK_i + (\lambda_i -1)B G_i]\nonumber\\
\geqslant & P_i(1) - \bar{A}_i^{\rm T} P_i(1) \bar{A}_i - \|\lambda_i -1\|^2 \Delta_i(1)^{\rm T}\nonumber \\
& \times (I + B(R+B^{\rm T}P_i(1)B)^{-1}B^{\rm T}) \Delta_i(1)- \Pi_i.
\end{align*}
Using Lemma \ref{lemma_special_G_i} with $\lambda_i = 1$, we get
\begin{align*}
& P_i(1) - [A-BK_i + (\lambda_i - 1) B G_i]^{H} \nonumber \\
& \times P_i(1)[A-BK_i + (\lambda_i -1)B G_i]\nonumber\\
> & Q_i + \|\lambda_i -1\|^2 \Delta_i(1)^{\rm T}\nonumber \\
& \times (I + B(R+B^{\rm T}P_i(1)B)^{-1}B^{\rm T}) \Delta_i(1)- \Pi_i.
\end{align*}
Finally, applying the result in Lemma \ref{lemma_delta_auxiliary} and using the condition in (\ref{equ_sufficient_Q}), we obtain that
$P_i(1) - [A-BK_i + (\lambda_i - 1) B G_i]^{H} P_i(1)[A-BK_i + (\lambda_i -1)B G_i]>0$.
That is, $A-BK_i + (\lambda_i - 1) B G_i$ is stable for all $\lambda_2,\cdots, \lambda_M$.
According to Theorem \ref{thm_iff}, the consensus can be reached. This completes the proof.
\subsection{Sufficient Conditions for One-dimensional Systems}
For each agent $i$ of one-dimensional linear dynamics, the system equation becomes
\begin{equation}\label{equ_one_dimension_system}
x_i(k+1) = a x_i(k) + b u_i (k).
\end{equation}
For one-dimensional systems, the corresponding parameters $R, Q_i, Q_{iN}, P_i(n), \Delta_i(n)$ reduce to scalars $r, q_i, q_i^N, p_i(n),\delta_i(n)$.
The result for ensuring consensus is reported in the following corollary.
\begin{corollary}\label{cor_consensus_1D}
For system in (\ref{equ_one_dimension_system}) with the network topology $\mathcal{G}$, assume that $r_i$ is designed as in (\ref{equ_design_R}) and the  $\mathcal{G}$ contains a panning tree.
If $q_i$, $q_i^N$ and $r$ are designed such that
\begin{align}
& p_i(N) - p_i(N-1) >0,\label{equ_qi_qiN}\\
& |\alpha_i|^{N-1} q_i^N + \sum_{l = 1}^{N-2}|\alpha_i|^l q_i < \bar{\theta}_{min},\label{equ_qn_bound}
\end{align}
then all the states of each agent can reach consensus. Here, $\alpha_i \triangleq \frac{r a}{r + b^2 p_i(1)}$,
$\bar{\theta}_{min} = \min_{i}\{\theta_i\}$, $\theta_i = |ar|\frac{\sqrt{(1-a_{ci}^2)b_i^2 + (1-a_i)^2 a_{ci}^2}-(|1-a_i|a_c^2)}{(1-a_i)^2a_{ci}^2 + b_i^2}$,
$a_{ci} = \frac{ar}{r + b^2 p_i(1)}$, $a_i = {\rm Re}(\lambda_i)$, $b_i = {\rm Imag}(\lambda_i)$, and $\lambda_i$ $i=2, \cdots, M$ are the eigenvalues of $\tilde{\mathcal{L}}$.
\end{corollary}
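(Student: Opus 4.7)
The plan is to specialize Theorem \ref{thm_iff} to the scalar setting and convert the eigenvalue stability condition into an explicit inequality on $\delta_i(1)$, then use the scalar analogue of Lemma \ref{lemma_delta_auxiliary} to relate that bound to the design parameters $q_i$, $q_i^N$. Since $\mathcal{G}$ contains a spanning tree, condition (a) of Theorem \ref{thm_iff} is immediate, so the entire effort goes into condition (b), which in one dimension reads $|a - bk_i + (\lambda_i - 1) b g_i| < 1$ for every non-zero eigenvalue $\lambda_i$ of $\tilde{\mathcal{L}}$.

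First I would unpack the scalar forms $a - bk_i = \alpha_i = a_{ci}$ and $bg_i = b^2 \delta_i(1) / (r + b^2 p_i(1))$, and note that the hypothesis $p_i(N) - p_i(N-1) > 0$ is exactly the scalar version of (\ref{equ_monotically_P}), so the scalar Lemma \ref{lemma_RDE} applies, yielding monotonicity of $\{p_i(n)\}$ and in particular $|a_{ci}| < 1$. Writing $\lambda_i = a_i + j b_i$ with $a_i = \textup{Re}(\lambda_i)$, $b_i = \textup{Imag}(\lambda_i)$, the squared modulus of the closed loop eigenvalue expands to
\begin{equation*}
\bigl(a_{ci} + (a_i-1) b g_i\bigr)^2 + b_i^2 (b g_i)^2 < 1,
\end{equation*}
which, after substituting $b g_i = a_{ci} b^2 \delta_i(1)/(ar)$, becomes a real quadratic in $\delta_i(1)$ whose coefficients match those producing the denominator $(1-a_i)^2 a_{ci}^2 + b_i^2$ and the discriminant $(1 - a_{ci}^2) b_i^2 + (1-a_i)^2 a_{ci}^2$ that appear in $\theta_i$.

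Next I would solve this quadratic. Because $|a_{ci}| < 1$, the constant term is negative and the two real roots bracket $0$; the sufficient condition is $|\delta_i(1)|$ strictly less than the smaller of the two root magnitudes. Selecting the correct root (which requires tracking the sign of $a_{ci}(a_i - 1)$) and carrying the prefactor $|ar|$ through the simplification yields exactly the expression $\theta_i$ given in the statement. Taking the minimum over $i$ gives $\bar{\theta}_{\min}$, so that $|\delta_i(1)| < \bar{\theta}_{\min}$ guarantees stability of $a - bk_i + (\lambda_i-1) bg_i$ simultaneously for all $\lambda_2, \dots, \lambda_M$.

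Finally I would invoke the scalar form of Lemma \ref{lemma_delta_auxiliary}. In one dimension the auxiliary recursion collapses to $\tilde{\delta}_i(1) = \bar{\rho}_i^{N-1} q_i^N + \sum_{l=1}^{N-2} \bar{\rho}_i^l q_i$ with $\bar{\rho}_i = |\alpha_i|$, which is precisely the left-hand side of (\ref{equ_qn_bound}). Thus condition (\ref{equ_qn_bound}) yields $|\delta_i(1)| \leq \tilde{\delta}_i(1) < \bar{\theta}_{\min} \leq \theta_i$, and Theorem \ref{thm_iff} delivers consensus. The main obstacle is purely algebraic: matching the displayed form of $\theta_i$ to the smaller-root magnitude of the stability quadratic, which requires careful sign analysis when $\lambda_i$ is complex and when $a_{ci}(a_i-1)$ changes sign across the nonzero Laplacian eigenvalues.
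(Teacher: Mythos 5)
Your proposal is correct and follows essentially the same route as the paper: specialize condition (b) of Theorem \ref{thm_iff} to the scalar case, reduce $|a_{ci} + (1-\lambda_i) b g_i|<1$ to a quadratic inequality in $|\delta_i(1)|$ whose positive root is $\theta_i$, and bound $|\delta_i(1)|$ by the geometric auxiliary sequence (the paper re-derives that sequence in place rather than citing Lemma \ref{lemma_delta_auxiliary}, and over-bounds the real part with absolute values instead of tracking the sign of $a_{ci}(a_i-1)$, but the resulting quadratic and conclusion are the same).
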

\begin{proof}
By plugging $K_i = \frac{ab p_i(1)}{r + b^2 p_i(1)}$ and $G_i = \frac{b\delta_i(1)}{r + b^2 p_i(1)}$ into $A-B K_i + (\lambda_i -1) B G_i$, we have
\begin{align*}
A-BK_i + (\lambda_i -1) B G_i  = \frac{ar + (1-\lambda_i)b^2 \delta_i(1)}{r + b^2 p_i(1)}.
\end{align*}
To achieve consensus, we need to develop conditions to ensure that $|\frac{ar + (1-\lambda_i)b^2 \delta_i(1)}{r + b^2 p_i(1)}|<1$ for $\lambda_2, \cdots, \lambda_M$. To that need, we consider the term $T\triangleq |a r + (1-\lambda_i)b^2 \delta_i(1)|^2-|r + b^2 p_i(1)|^2$.
Firstly, we derive an upper bound of $\delta_i(1)$. Define a sequence $\{\tilde{\delta}_i(n), n = N-1, \cdots, 0\}$ as
\begin{equation}\label{equ_tilde_delta}
\tilde{\delta}_i(n) = \frac{ar\tilde{\delta}_i(n+1)}{r + b^2 p_i(1)} + q_i,
\end{equation}
where $\tilde{\delta}_i(N) = q_{iN}$. Because of (\ref{equ_qi_qiN}), Lemma \ref{lemma_RDE} can be used, implying $p_i(1)< \cdots < p_i(N)$. As a result, $|\tilde{\delta}_i(n)| \geqslant |\delta_i(n)|$.
Therefore, a sufficient condition for guaranteeing $T<0$ is
\begin{align}\label{equ_1D_inequality}
(|ar| + |1-a_i| b^2 |\tilde{\delta}_i(1)|)^2 + b_i^2 b^2 |\tilde{\delta}_i(1)|^2 -(r + b^2 p_i(1))^2 < 0.
\end{align}
By some algebraic operations, (\ref{equ_1D_inequality}) reduces to
\begin{align}\label{equ_1D_inequality_second}
(|c_i|^2 + |d_i|^2) (\tilde{\delta}_i(1))^2 + 2 |a_c| |c_i| |\tilde{\delta}_i(1)| + |a_c|^2 - 1 <0,
\end{align}
where $c_i = \frac{|1-a_i|b^2 |a_c|}{ar}$ and $d_i = \frac{b^2 b_i}{ar}$.

On the other hand, from (\ref{equ_tilde_delta}), we can obtain that $|\tilde{\delta}_i(1)|\leqslant |\alpha_i|^{N-1} q_{iN} + \sum_{l = 1}^{N-2}|\alpha_i|^l q_i$.
Using the condition in (\ref{equ_qn_bound}), we have $|\tilde{\delta}_i(1)|\leqslant \bar{\theta}_{min}$, which ensures (\ref{equ_1D_inequality_second}) holds.
This completes the proof.
\end{proof}

\section{Simulation Studies}\label{Sec_simulation}
In this section, two examples are provided to verify the theoretical results.
\subsection{One-dimensional Case}
Consider a multi-agent system with $5$ agents, and the model for each agent is given as
\begin{equation*}
x_i(k+1) = 2 x_i(k) + u_i(k).
\end{equation*}
The adjacency matrix $\mathcal{A}$ of $\mathcal{G}$ is given as
$\mathcal{A} = \left[
   \begin{array}{ccccc}
     0 & 1 & 0 & 0 & 1 \\
     0 & 0 & 1 & 0 & 0 \\
     0 & 0 & 0 & 1 & 0 \\
     1 & 0 & 1 & 0 & 0 \\
     1 & 1 & 1 & 0 & 0 \\
   \end{array}
 \right]
$. The Laplacian matrix $\tilde{\mathcal{L}}$ of $\tilde{\mathcal{G}}$ can be figured out as $\tilde{\mathcal{L}} = \left[
   \begin{array}{ccccc}
     1 & -\frac{1}{2} & 0 & 0 & -\frac{1}{2} \\
     0 & 1 & -1 & 0 & 0 \\
     0 & 0 & 1 & -1 & 0 \\
     -\frac{1}{2} & 0 & -\frac{1}{2} & 1 & 0 \\
     -\frac{1}{3} & -\frac{1}{3}  & -\frac{1}{3}  & 0 & 1 \\
   \end{array}
 \right]$.
It can be seen that $\mathcal{G}$ contains a panning tree.
The parameters $q_i$, $q_{iN}$ are designed as $q_i=2$, and $q_{iN}=6$, $i=1,\cdots, 5$.
The parameter $r$ is designed as $r = 1$, and thus, the corresponding $r_i$, $i=1,\cdots, 5$,
are $r_1 = \frac{1}{2}$, $r_2 = 1$, $r_3 = 1$, $r_4 = \frac{1}{2}$, and $r_5 = \frac{1}{3}$.
The horizon is chosen as $N=3$.
By checking the condition in (\ref{equ_qi_qiN}), we have $p_i(3)-p_i(2) = 0.5714>0$.
Furthermore, $\bar{\theta}_{min}$ is calculated as $3.3391$,
and the term $|\alpha_i|^{2} q_{iN} + |\alpha_i| q_i = 1.2186$.
Thus, (\ref{equ_qn_bound}) verifies. According to Corollary \ref{cor_consensus_1D}, the multi-agent system will achieve consensus.
We plot the simulated trajectories of the five agents in Fig. \ref{fig_1d_state}, showing that the system can reach consensus.
\begin{figure}[!hbt] \centering
\includegraphics[width=0.5\textwidth]{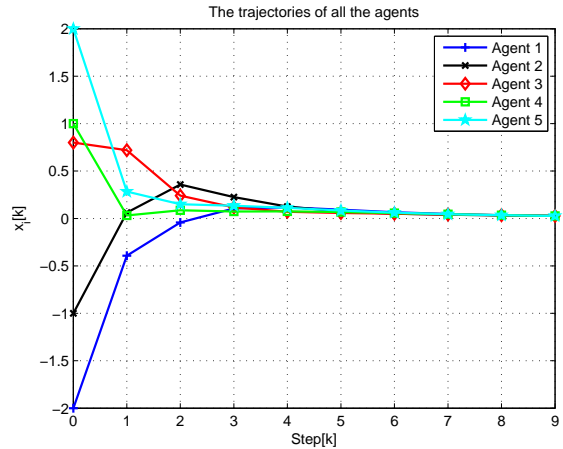}
\caption{Trajectories for 5 agents with 1D system dynamics.}\label{fig_1d_state}
\end{figure}

\subsection{General Linear Systems Case}
Consider a multi-agent system with $3$ agents, and each of a general LTI dynamics.
The system matrices are $A = \left[
                               \begin{array}{cc}
                                 2 & 0 \\
                                 1.2 & -1 \\
                               \end{array}
                             \right]$,
$B = \left[\begin{array}{c}
1 \\
1 \\
\end{array}
\right]$.
The diagraph $\mathcal{G}$ contains a spanning tree, and it adjacency matrix $\mathcal{A}$ is $\mathcal{A} = \left[
                                                                        \begin{array}{ccc}
                                                                          0 & 1 & 1 \\
                                                                          0 & 0 & 1 \\
                                                                          1 & 1 & 0 \\
                                                                        \end{array}
                                                                      \right]$.
Thus, the corresponding matrix $\tilde{\mathcal{L}} = \left[
                                                      \begin{array}{ccc}
                                                        1 & -\frac{1}{2} & -\frac{1}{2} \\
                                                        0 & 1 & -1 \\
                                                        -\frac{1}{2} & -\frac{1}{2} & 1 \\
                                                      \end{array}
                                                    \right]
$.
The matrices $Q_i$ and $Q_{iN}$ are designed as $Q_i = {\rm \diag}(2, 2)$ and $Q_{iN} = {\rm \diag}(15, 20)$, $i=1, 2, 3$.
The matrices $R_i$ are designed as $R_1 = \frac{1}{2}$, $R_2 = 1$ and $R_3 = \frac{1}{2}$, respectively, which satisfies (\ref{equ_design_R}) with $R = 1$. The prediction horizon is chosen as $N = 10$.

Calculate $P_i(10) - P_i(9) = \left[
                                \begin{array}{cc}
                                  5.2  & -6 \\
                                  -6 & 9.111 \\
                                \end{array}
                              \right]>0
$. Thus, the condition in (\ref{equ_monotically_P}) is verified.
The eigenvalues of $\tilde{\mathcal{L}}$ are $\lambda_1 = 0$, $\lambda_2 = 1.5 + j0.5$, and $\lambda_3 = 1.5 - j0.5$.
When $\lambda_2 = 1.5 \pm j0.5$, the left-hand side of (\ref{equ_sufficient_Q}) is $[0.7045 -0.0621; -0.0621 0.7089]$, which is positive definite.
Therefore, the condition in (\ref{equ_sufficient_Q}) is satisfied.
The simulated system trajectories are plotted in Fig. \ref{fig_2d_state1} and Fig. \ref{fig_2d_state2}, respectively.
From these two figures, it can be seen that the
consensus is reached.

\begin{figure}[!hbt] \centering
\includegraphics[width=0.5\textwidth]{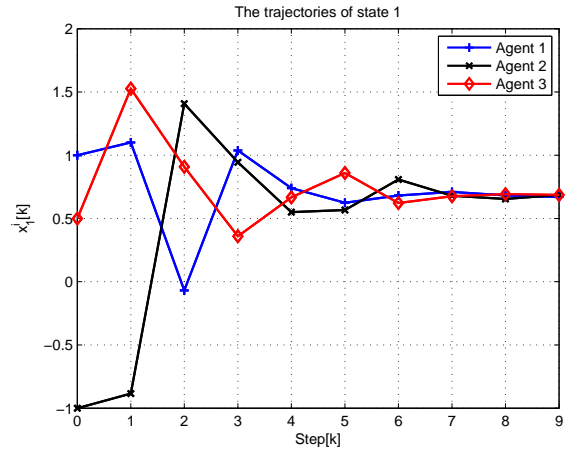}
\caption{Trajectories of state 1.}\label{fig_2d_state1}
\end{figure}
\begin{figure}[!hbt] \centering
\includegraphics[width=0.5\textwidth]{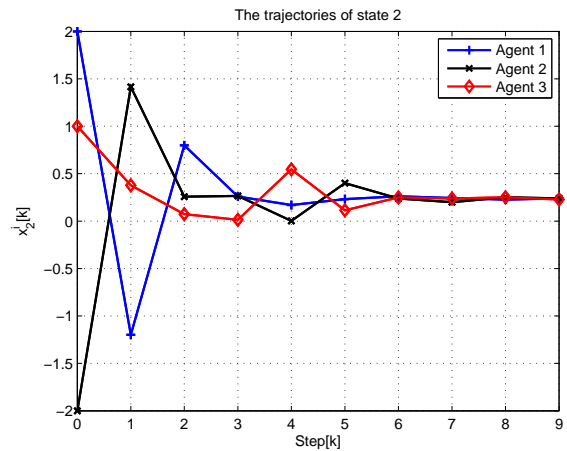}
\caption{Trajectories of state 2.}\label{fig_2d_state2}
\end{figure}

\section{CONCLUSIONS}\label{Sec_Conclusion}
In this paper, we have proposed a novel consensus scheme by using the distributed RHC for general LTI multi-agent systems.
The necessary and sufficient conditions for ensuring consensus have been developed.
Furthermore, we have developed more easily solvable conditions for multi-agent systems of general LTI and one-dimensional system dynamics, respectively. The developed theoretical results have been verified by two numerical studies.
%
%
%

\bibliographystyle{IEEEtran}
\bibliography{ref}

\end{document}